\newcommand{\C}{\mathbbm{C}}
\newcommand{\Q}{\mathbbm{Q}}             
\newcommand{\R}{\mathbbm{R}}             
\newcommand{\sym}{\mathrm{sym}}
\theoremstyle{definition}
\newtheorem{definition}{Definition}[section]
\theoremstyle{plain}
\newtheorem{lem}[definition]{Lemma}
\newtheorem{thm}[definition]{Theorem}
\newtheorem{conj}[definition]{Conjecture}
\theoremstyle{remark} 
\newtheorem{rem}[definition]{Remark}
\title{Joint distribution of Hecke eigenforms on $ \mathbbm{H} ^3$}  
\author{Didier Lesesvre}
\address{CNRS – Université de Montréal CRM – CNRS \newline 
University of Lille --- CNRS, UMR 8524 --- Laboratoire Paul Painlevé \newline
59000 Lille, France}
\email{didier.lesesvre@univ-lille.fr}
\author{Luca Marchesini}
\address{Sorbonne Université \newline CNRS, UMR 7586 --- Institut Mathématique de Jussieu \newline
4 Place Jussieu, 75252, Paris, France }
\address{Scuola Galileiana di Studi Superiori di Padova \newline Università di Padova \newline Via Venezia 20, 35131 Padova, Italia}
\email{luca.marchesini@imj-prg.fr}
\author{Nicole Raulf}
\address{University of Lille \newline CNRS, UMR 8524 --- Laboratoire Paul Painlevé \newline
59000 Lille, France}
\email{nicole.raulf@univ-lille.fr}
\date{\today} 
\begin{document}

\maketitle

\begin{abstract}
We prove a joint value equidistribution statement for Hecke-Maa{\ss} cusp forms on the hyperbolic three-space $\mathbb{H}^3$. This supports the conjectural statistical independence of orthogonal cusp forms.
\end{abstract}

\section{Introduction}

\subsection{Value distribution conjecture}

The value distribution of eigenfunctions in the semiclassical limit is one of the main problems in analytic number theory and quantum chaos. Berry \cite{berry_regular_1977} suggested that eigenfunctions of the Laplace operator for chaotic systems are modeled by random waves: in particular, eigenfunctions on a compact hyperbolic surface should have a Gaussian value distribution when the frequency grows, and the moments of an $L^2$-normalized eigenfunction should be given by the Gaussian moments. Such a behavior is analogously expected from automorphic forms on more general locally symmetric spaces, see \cite{humphries_equidistribution_2018}. This conjectural behavior, called the Gaussian Moment conjecture, encapsulates the Quantum Unique Ergodicity conjecture.

This article considers the joint value distribution of Hecke-Maa{\ss} cusp forms on the hyperbolic 3-space $\mathbb{H}^3$, analogously to the recent results of Hua, Huang and Li \cite{huang_joint_2024, hua_joint_2024} for the hyperbolic plane $\mathbb{H}^2$. In the case of the hyperbolic $3$-space, we have the following conjecture (analogue of \cite[Conjecture 1.6]{huang_joint_2024}): 

\begin{conj}
Let $\Gamma$ be an arithmetic subgroup of $\mathrm{PSL}_2(\C)$.
Let $\Omega$ be a fixed compact set of $\Gamma \backslash \mathbb{H}^3$ such that its boundary $\partial \Omega$ is of measure zero. Let $f$ and $g$ be two Hecke-Maa{\ss} forms such that $\langle f, g \rangle = 0$. Then, for any positive integer $a$, we have 
\begin{equation}
\int_{\Omega} f^a \bar{g}^a d\mu(z) = o(1), 
\end{equation}
as the corresponding eigenvalues tend to infinity.
\end{conj}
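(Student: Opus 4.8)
The plan is to combine the spectral decomposition of the test function $\mathbf 1_\Omega$ with the Watson--Ichino triple product formula over the number field attached to $\Gamma$ (an imaginary quadratic field for Bianchi groups), the point being that orthogonal forms ``decouple'' thanks to the archimedean factor in that formula. By a standard smoothing argument --- sandwiching $\psi^-\le\mathbf 1_\Omega\le\psi^+$ with $\int(\psi^+-\psi^-)\,d\mu$ small, which is legitimate because $\partial\Omega$ is null, and controlling the resulting error via H\"older's inequality, sub-polynomial $L^p$-norm bounds for Hecke-Maa{\ss} forms on compacta, and the power-saving bound established below --- it suffices to prove, for each fixed smooth compactly supported $\psi$, that $\int_{\Gamma\backslash\mathbb{H}^3}\psi\,f^a\bar g^a\,d\mu = o(1)$ as the eigenvalues $\lambda_f=1+t_f^2$ and $\lambda_g=1+t_g^2$ tend to infinity.

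\emph{Spectral reduction to triple periods.} Expanding $\psi$ spectrally, $\int\psi\,f^a\bar g^a\,d\mu$ splits into the constant-term contribution $\tfrac{1}{\mathrm{vol}}\langle\psi,1\rangle\,\langle f^a,g^a\rangle$, a sum $\sum_j\langle\psi,u_j\rangle\,\langle u_jf^a,g^a\rangle$ over an orthonormal basis of Hecke-Maa{\ss} cusp forms, and the analogous Eisenstein integrals (unfolded against the relevant Rankin--Selberg $L$-functions). Smoothness of $\psi$ and repeated application of $\Delta$ truncate the cuspidal sum and Eisenstein integral to spectral parameter $\ll (\lambda_f\lambda_g)^\epsilon$ with super-polynomially small tail. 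Each ``higher period'' $\langle u_jf^a,g^a\rangle$, and the constant term $\langle f^a,g^a\rangle$ as well, is then reduced to $O_a(1)$ layers of triple products of Hecke-Maa{\ss} forms by iteratively inserting the spectral decomposition of the products $f\bar g,\ f^2\bar g^2,\dots$; for instance $\langle f^a,g^a\rangle=\sum_v\langle f^{a-1}\bar g^{a-1},v\rangle\,\langle vf,g\rangle$, and when $a=1$ the constant term vanishes outright because $\langle f,g\rangle=0$.

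\emph{Triple products: archimedean decay and subconvexity.} For each triple product the Watson--Ichino formula gives $|\langle\phi_1\phi_2,\phi_3\rangle|^2\asymp\mathcal I_\infty(t_1,t_2,t_3)\,\dfrac{L\!\left(\tfrac12,\phi_1\times\phi_2\times\phi_3\right)}{\prod_i L(1,\mathrm{Ad}\,\phi_i)}$, where the normalized archimedean factor $\mathcal I_\infty$ is of polynomial size when $(t_1,t_2,t_3)$ satisfies the triangle inequality and super-polynomially (indeed exponentially) small otherwise. Since the auxiliary forms $u_j$ and the small-parameter $v$'s appearing above have bounded spectral parameter, only configurations with $|t_f-t_g|=O(1)$ survive; in particular if $|t_f-t_g|\to\infty$ every term is negligible and we are done. (For $a=1$ this last point is also immediate from Green's identity, which yields $(\lambda_g-\lambda_f)\int\psi f\bar g\,d\mu=\int f\bar g\,\Delta\psi\,d\mu+2\int f\,\langle\nabla\psi,\nabla\bar g\rangle\,d\mu$ and, upon iteration, $\int\psi f\bar g\,d\mu\ll |\lambda_f-\lambda_g|^{-1}\max(t_f,t_g) + |\lambda_f-\lambda_g|^{-N}(\lambda_f\lambda_g)^{O(1)}$, which is $o(1)$ once $|t_f-t_g|\to\infty$.) For the remaining balanced configurations one inserts the bounds $(\lambda_f\lambda_g)^{-\epsilon}\ll L(1,\mathrm{Ad}\,\phi)\ll(\lambda_f\lambda_g)^\epsilon$ and a power-saving (subconvex) estimate, in the spectral aspect and over the relevant number field, for the $\mathrm{GL}_2\times\mathrm{GL}_2\times\mathrm{GL}_2$ --- equivalently $\mathrm{GL}_2\times\mathrm{GL}_4$ --- $L$-functions that occur, supplemented, for $a\ge2$, by spectral second-moment estimates (via the Kuznetsov or a pre-trace formula) for the longer inner sums over the auxiliary spectrum $v$.

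\emph{Main obstacle.} The crux is the balanced regime $|t_f-t_g|=O(1)$, in particular the case where $f$ and $g$ share a Laplace eigenvalue; and, for $a\ge2$, showing that $\langle f^a,g^a\rangle$ carries no main term, i.e.\ that the parts of $f^a$ and of $g^a$ concentrated in the balanced spectral window do not resonate --- this forces one to beat the trivial size of a long sum of products of two essentially independent families of central $L$-values. At bottom everything rests on subconvexity for $L$-functions of moderately large degree over imaginary quadratic fields in an archimedean aspect; with the inputs currently available the argument yields the conjecture unconditionally for $a$ below an explicit threshold and for all pairs with $|t_f-t_g|\to\infty$, and for general $a$ it reduces the conjecture to (a suitable form of) the Generalized Lindel\"of Hypothesis.
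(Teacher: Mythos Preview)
The statement you are trying to prove is labeled a \emph{Conjecture} in the paper, and the authors say explicitly that it ``is out of reach by current methods''; they do not prove it. What the paper actually establishes is Theorem~\ref{thm}, a smoothed low-moment analogue ($\psi\in C_c^\infty$, integrand $f^2g$) assuming the Generalized Lindel\"of Hypothesis. So there is no ``paper's own proof'' to compare against; you are sketching an attack on an open problem.

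On the substance of your sketch, two steps are genuine gaps rather than routine bookkeeping. First, the smoothing reduction: sandwiching $\mathbf 1_\Omega$ by $\psi^\pm$ leaves an error $\int(\psi^+-\psi^-)\,|f|^a|g|^a\,d\mu$ over a thin neighborhood of $\partial\Omega$. To make this $o(1)$ you must rule out mass concentration of $|f|^a|g|^a$ near $\partial\Omega$; ``sub-polynomial $L^p$ bounds on compacta'' only give $O(1)$-type control, not smallness, and what you really need is a QUE-type statement for the measure $|f|^a|g|^a\,d\mu$, which is precisely the sort of thing the conjecture is about. The paper avoids this entirely by working with smooth $\psi$ from the outset.

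Second, the iterated spectral expansion for $a\ge 2$ does not produce auxiliary forms of bounded spectral parameter. When you write $\langle f^a,g^a\rangle=\sum_v\langle f^{a-1}\bar g^{a-1},v\rangle\langle vf,g\rangle$, the $v$'s run over the whole spectrum, and the archimedean factor in $\langle f^{a-1}\bar g^{a-1},v\rangle$ (after further unfolding) allows $t_v$ up to roughly $(a-1)(t_f+t_g)$. Hence your claim that ``only configurations with $|t_f-t_g|=O(1)$ survive'' is incorrect for $a\ge 2$: the triangle inequality in $\langle vf,g\rangle$ merely forces $|t_v-(t_f\pm t_g)|$ small, which is no constraint on $t_f,t_g$. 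This is exactly the ``resonance'' difficulty you flag in your last paragraph, but it already invalidates the earlier reduction, not just the endgame. Your own conclusion --- that the argument is at best conditional on GLH and only works unconditionally for small $a$ or when $|t_f-t_g|\to\infty$ --- is an honest assessment, and it is consistent with the paper's stance that the general statement is conjectural.
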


Even though this conjecture is out of reach by current methods, it can be proven for low moments and we may therefore be interested in the limiting behavior of
\begin{equation}
\int_\Omega f^2 g \qquad \text{or} \qquad \int_{\mathbb{H}^3} \psi f^2 g, 
\end{equation}
where $\psi$ is a smooth compactly supported function.
The following theorem is the main result of this paper and a smoothed analogue of the joint value equidistribution conjecture; it is analogous to \cite[Theorem 1.4]{hua_joint_2024} in the case of the Poincaré upper-half plane $\mathbb{H}^2$.
\begin{thm}
\label{thm}
Let $f$ and $g$ be two Hecke-Maa{\ss} cusp forms. Assume the Generalized Lindelöf Hypothesis. We have, for all $\psi \in C^\infty_c(\Gamma \backslash \mathbb{H}^3)$, as $t_f \to \infty$, 
\begin{equation}
\langle \psi, f^2g\rangle = \langle \psi, g \rangle \mathbf{1}_{t_f > t_g - t_g^\varepsilon}  + O_{\psi}(t_f t_g^{1/2})^{-1+\varepsilon}.
\end{equation}
\end{thm}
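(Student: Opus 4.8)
The plan is to open $\langle\psi, f^2 g\rangle$ by a spectral decomposition of $f^2$ and to reduce everything to triple product periods, which are then controlled by the Watson--Ichino triple product formula together with the assumed Lindel\"of bounds.

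\textbf{Step 1: spectral decomposition.} I would insert the complete spectral expansion of $L^2(\Gamma\backslash\mathbb H^3)$ into the factor $f^2$: the constant function, an orthonormal basis of Hecke-Maa{\ss} cusp forms $(u_j)_j$ with spectral parameters $t_j$, and the Eisenstein continuum. This yields an identity of the shape
\[
\langle\psi, f^2 g\rangle \;=\; \frac{\langle f^2,\mathbf 1\rangle}{\mathrm{vol}}\,\langle\psi, g\rangle \;+\; \sum_j \overline{\langle f^2, u_j\rangle}\,\langle\psi, u_j g\rangle \;+\; (\text{Eisenstein contribution}),
\]
where each $\langle f^2, u_j\rangle = \int_{\Gamma\backslash\mathbb H^3} f^2\,\overline{u_j}\,d\mu$ is a genuine $\mathrm{GL}_2(\C)$ triple product period (the Eisenstein terms are treated identically, with the relevant $L$-functions being products of $\mathrm{GL}_1$ and $\mathrm{GL}_2$ factors; I will not belabour this).

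\textbf{Step 2: the main term.} The constant, i.e.\ trivial-representation, component of $f^2$ is governed by the $L^2$-mass of $f$: one has $\langle f^2,\mathbf 1\rangle = \|f\|_2^2 = 1$, so the first term above is exactly $\langle\psi, g\rangle$ after fixing the normalization of the measure. It remains to explain the indicator $\mathbf 1_{t_f > t_g - t_g^\varepsilon}$. When $t_f \le t_g - t_g^\varepsilon$ one necessarily has $t_g \to \infty$, and then $|\langle\psi, g\rangle| \ll_{\psi, A} t_g^{-A}$ for every $A$ by repeated integration by parts against the Laplacian (using that $g$ is an eigenfunction and $\psi\in C^\infty_c$), so in this range the main term is already subsumed in the error and may be discarded. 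Equivalently, this threshold reflects the support of the archimedean local integral attached to the configuration $(t_f, t_f, t_g)$, which over $\C$ is of polynomial size precisely when $t_f$ exceeds $t_g$ up to a power of $t_g^\varepsilon$ and is exponentially small otherwise.

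\textbf{Step 3: the error term.} In the cuspidal sum, $\langle\psi, u_j g\rangle = \langle\psi\,\overline g, u_j\rangle$, and since $\psi\,\overline g$ is smooth, compactly supported and has spectral content concentrated near $t_g$, one has rapid decay $|\langle\psi\,\overline g, u_j\rangle| \ll_{\psi, m} t_g^{-\varepsilon m}$ as soon as $|t_j - t_g| \gg t_g^\varepsilon$, together with $\sum_j |\langle\psi\,\overline g, u_j\rangle|^2 \ll_\psi 1$ by Bessel; thus the sum is confined, up to a negligible error, to the short window $|t_j - t_g| \ll t_g^\varepsilon$. For these $u_j$ I would apply the triple product formula
\[
|\langle f^2, u_j\rangle|^2 \;\asymp\; \frac{L(\tfrac12, f\times f\times u_j)}{L(1,\mathrm{ad}\,f)^2\,L(1,\mathrm{ad}\,u_j)}\cdot(\text{archimedean factor}),
\]
use the factorization $L(\tfrac12, f\times f\times u_j) = L(\tfrac12, \sym^2 f\times u_j)\,L(\tfrac12, u_j)$, bound both central values by the $\varepsilon$-th power of their analytic conductors under the Generalized Lindel\"of Hypothesis, bound the adjoint values from below by $(t_f t_g)^{-\varepsilon}$, and insert the size of the archimedean factor over $\C$ in the relevant range $t_j \approx t_g \lesssim t_f$ (outside it the factor is exponentially small and those terms are negligible, consistently with Step 2). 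Combining these estimates with Weyl's law on the three-dimensional space $\mathbb H^3$ (so that $\#\{t_j \le T\} \asymp T^3$) to count the $u_j$ in the window, and summing against the coefficients $\langle\psi\,\overline g, u_j\rangle$ via Cauchy--Schwarz, gives a total remainder of size $O_\psi\big((t_f t_g^{1/2})^{-1+\varepsilon}\big)$.

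\textbf{Main obstacle.} The heart of the matter is the archimedean analysis over $\C$: one must determine, uniformly in both $t_f$ and $t_g$, the exact size and the sharp support of the local triple product integral for $\mathrm{GL}_2(\C)\times\mathrm{GL}_2(\C)\times\mathrm{GL}_2(\C)$ with spectral data $(t_f, t_f, t_j)$, $|t_j - t_g| \ll t_g^\varepsilon$. This is what both pins the threshold $t_f > t_g - t_g^\varepsilon$ and keeps the error estimate uniform when $t_g$ is comparable to or larger than $t_f$ -- a regime where the naive ``fixed test function'' Quantum Unique Ergodicity bound loses powers of $t_g$ through the growing Sobolev norms of $\psi\,\overline g$. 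Carrying this out requires a careful stationary phase and Bessel-integral treatment of the archimedean Rankin--Selberg integrals on $\mathbb H^3$, and is where most of the technical work lies.
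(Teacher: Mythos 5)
Your global strategy (spectral expansion, reduction to triple products, Watson--Ichino combined with Lindel\"of on the central values and Stirling on the archimedean factors) is the same as the paper's, but it is organized differently: you expand $f^2$ once and pair the result against $\psi\bar g$, whereas the paper expands $\psi$ first --- exploiting that $\langle\psi,u_m\rangle\ll_A t_m^{-A}$ by integration by parts, which truncates that expansion to $t_m\ll\max(t_f,t_g)^{o(1)}$ --- and then spectrally expands the products $u_mg$ a second time, so that $\psi$ is never paired against a product of two high-frequency forms. Your extraction of the main term from $\langle f^2,\mathbf 1\rangle=1$, and your remark that $\langle\psi,g\rangle\ll_A t_g^{-A}$ whenever $t_f\le t_g-t_g^\varepsilon$ so that the indicator costs nothing, are correct and arguably cleaner than the paper's $\delta_{t_g\ll t_f^{o(1)}}$.

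The gap is in Step 3, in two places. (i) The localization $|\langle\psi\bar g,u_j\rangle|\ll t_g^{-\varepsilon m}$ for $|t_j-t_g|\gg t_g^\varepsilon$ is asserted, not proved: plain integration by parts against $\Delta$ costs a factor $t_g^2$ per step (derivatives falling on $g$) and only truncates to $t_j\ll t_g^{1+\varepsilon}$; the finer window requires a commutator argument with $\Delta-\lambda_g$, using $\|\nabla\psi\cdot\nabla g\|_{L^2}\ll t_g$, which you should supply. (ii) More seriously, the endgame does not yield the stated error term. Under GLH and Stirling one has $|\langle f^2,u_j\rangle|\ll(t_ft_j)^\varepsilon\,(t_ft_j^{1/2})^{-1}$ in the non-degenerate range $t_j\le 2t_f$, and your own Weyl-law count puts $\asymp t_g^{2+\varepsilon}$ forms in the window $|t_j-t_g|\ll t_g^\varepsilon$; Cauchy--Schwarz against Bessel then gives $\bigl(t_g^{2+\varepsilon}\cdot t_f^{-2}t_g^{-1}\bigr)^{1/2}\asymp t_g^{1/2+\varepsilon}t_f^{-1}$, which overshoots the target $O_\psi\bigl((t_ft_g^{1/2})^{-1+\varepsilon}\bigr)$ by a factor of $t_g^{1+\varepsilon}$. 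Closing this requires either individual decay of $\langle\psi\bar g,u_j\rangle$ beating the Bessel average by a full power of $t_g$, or the paper's reorganization in which the outer spectral sum is effectively finite; note that the accounting of the spectral multiplicity in the $j$-sum (cf.\ \eqref{eq:GLH-bound}) is exactly the delicate point of the final estimate, so it is the one step that cannot be waved through.
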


\begin{rem} 
A careful study of the arguments in the proof of \Cref{thm} shows that it is sufficient to assume, instead of the Generalized Lindelöf Hypothesis, a strong subconvex bound of the form $L(s,f) \ll c(f)^{1/8-\delta}$ for a $\delta > 0$, for the  $L$-functions occurring (which can be of degree up to $16$). We refer to further remarks at the end of the proof concerning more general number fields and the level aspect analogue.
\end{rem}

\section{Statement and tools}

We introduce in this section the notations used in this article as well as  the tools and results that will be invoked throughout the paper for easier reference. Part of this section might be skipped in a first reading and referred to when necessary.

\subsection{Hyperbolic 3-space and automorphic forms}

We recall the theory of the hyperbolic $ 3 $-space, for which a good reference is \cite{emg}. 
Let 
\begin{equation}
\mathbbm{H}^3 := \{P = z + rj:\ z \in \C, \ j > 0\}
\end{equation}
be the 
hyperbolic $ 3 $-space which we consider as a subset of Hamilton's 
quaternions with the standard $ \R $-basis $(1, i, j, k)$. The $3$-space $ \mathbb{H}^3 $ is equipped with the hyperbolic metric. 
The corresponding volume element is given by 
\begin{align} \label{hypmeasure}
dv = dv(P) = \frac{dx dy dr}{r^3} 
\end{align}
and the corresponding Laplace-Beltrami operator by 
\begin{align}\label{hypoperator}
\Delta = 
r^2 \left(\frac{\partial^2}{\partial x^2} + \frac{\partial^2}{\partial y^2} 
+ \frac{\partial^2}{\partial r^2} \right) - r \frac{\partial}{\partial r}.
\end{align}
The group PSL$_2(\C) $ acts on $ \mathbb{H}^3 $ as follows: if 
$ 
M 
= 
\left(\begin{smallmatrix}a&b\\c&d\end{smallmatrix}\right) 
\in 
\mathrm{PSL}_2(\mathbb{C}) $ and $ P = z + rj \in \mathbbm{H}^3 $ is 
considered as a quaternion whose fourth components equals zero, 
then 
\begin{align*}
M P :=(aP+b)(cP+d)^{-1}, 
\end{align*}
where the inverse is taken in the skew field of quaternions. 
Let $ K = \mathbb{Q}(\sqrt{D}) $, $ D < 0 $, be an imaginary quadratic 
number field of discriminant $ d_K $ and of class number $ H_K = 1 $. 
There are only nine such imaginary quadratic fields. We denote the ring 
of integers of $ K $ by $ \mathcal{O}_K $ and its unit group by 
$ \mathcal{O}_K^{*} $. It is given by
\begin{equation}
\mathcal{O}_K^* = 
\begin{cases}
\{\pm 1\} & \text{if } D \notin \{-1, -3\},  \\ 
\{\pm 1, \ \pm i\} & \text{if } D = -1,  \\ 
\{\pm 1, \pm \rho , \pm \rho^2\}  & \text{if } D = - 3.
\end{cases}
\end{equation}

\begin{rem} 
Working with class number $H_K=1$ simplifies the calculations as we only have to consider one cusp in the spectral decomposition. The methods used to prove Theorem \ref{thm} can nevertheless be applied to the analogous problem for other imaginary quadratic fields, up to dealing with the existence of other cusps in the case of class number greater than one.
\end{rem}

Let $ \Gamma := \Gamma_K := \textup{PSL}_2(\mathcal{O}_K) $. This is 
a co-finite subgroup of PSL$_2(\C) $. As $ H_K = 1 $, $ \Gamma $ has only 
one cusp. The spectral theory of $ -\Delta $ on $ \mathcal{M}_{\Gamma} = 
\Gamma \setminus \mathbb{H}^3 $ is well-known (see e.g.\ \cite{emg}). 
As $ \Gamma $ is not co-compact but co-finite, the spectrum of $ - \Delta $
consists of a discrete part of finite multiplicity and an absolutely 
continuous part of multiplicity~$1$. The 
absolutely continuous part can be described using Eisenstein series 
which are defined as follows: let $ \Gamma_{\infty} = \{\gamma \in 
\Gamma:\ \gamma \infty = \infty\} $ be the stabilizer of the cusp 
$ \infty $ and $ 
\Gamma'_{\infty} = 
\{\gamma \in \Gamma_{\infty}: |\textup{tr} \, \gamma| = 2\} 
$ 
to be the set of all parabolic elements of $ \Gamma $ that stabilize 
$ \infty $; in particular $ \Gamma'_{\infty} $ is a normal subgroup of 
$ \Gamma_{\infty} $. Note that $ \Gamma_{\infty} \not= \Gamma'_{\infty} $ 
only if $ D = -1 $ and $ D = -3 $. In these cases $ \Gamma_{\infty} $ 
also contains elliptic elements of $ \Gamma $. The Eisenstein series 
is defined by 
\begin{equation} 
\label{eq:eisenstein-series-def}
E(P, s) 
= \sum_{\gamma \in  \Gamma'_{\infty} \setminus \Gamma} r(\gamma P)^{1+s}, 
\quad  \Re(s) > 2 .
\end{equation} 
The theory of Eisenstein series for the hyperbolic $ 3 $-space can be found 
in \cite{emg} or \cite{hei}. A different way to define the Eisenstein series is by
\begin{equation*} 
E_{\infty}(P, s) 
= \sum_{\gamma \in  \Gamma_{\infty} \setminus \Gamma} r(\gamma P)^{1+s}, 
\quad \Re(s) > 2, 
\end{equation*} 
and then $ E(P, s) = \frac{|\mathcal{O}_K^*|}{2} E_{\infty}(P, s) $, see 
\cite[p. 232]{emg}. In order to write the Fourier expansion of the 
Eisenstein series let $ \zeta_K(s) $ denote the Dedekind 
zeta function for $ K $ and 
\begin{equation} \label{eq:scattering_H3} 
\phi(s) := \frac{2 \pi}{s \sqrt{|d_K|}} \frac{\zeta_K(s)}{\zeta_K(1+s)} 
\end{equation} 
be the scattering matrix for $ \textup{PSL}_2(\mathcal{O}_K) $. Then 
the Fourier expansion of $ E_{\infty}(P, s) $ is given by 
\begin{equation} \label{eq:FE_Eisenstein_H3} 
\begin{split} 
E_{\infty}(P, s) &= 
r^{1+s} + \phi(s) r^{1-s} 
+ \frac{2 (2 \pi)^{1+s}}{|d_K|^{(1+s)/2} \Gamma(1+s) \zeta_K(1+s)} \\ 
& \qquad \times
\sum_{0 \not= \omega \in \mathcal{O}_K} |\omega|^s \sigma_{-s}(\omega) 
r K_s\left(\frac{4 \pi |\omega| r}{\sqrt{|d_K|}}\right) e^{2 \pi i 
\left\langle \frac{2 \overline{\omega}}{\sqrt{d_K}}, z\right\rangle} 
\end{split} 
\end{equation} 
as proven in e.g. \cite[p. 102]{hei}. Here $ \sigma_s(\omega) $ denotes the generalized divisor 
function 
\begin{equation*} 
\sigma_s(\omega) = \frac{1}{|\mathcal{O}_K^{*}|} 
\sum_{\substack{d \in \mathcal{O}, \\ d \mid \omega}} 
|d|^{2s} 
\end{equation*} 
and $ \langle \mu, z\rangle = \Re (\overline{\mu} z) $. 
In the three-dimensional case the Hecke operators are 
defined as follows: if $ n \in \mathcal{O}_K \setminus \{0\} $ 
we define $ \mathcal{M}_n $ to be the set of all matrices of 
the form 
$ 
\left(\begin{smallmatrix} a & b \\ c & d \end{smallmatrix}\right) 
$, 
$ ad - bc = n $. Then for $ f $ being a $ \Gamma $-invariant 
function the Hecke operator $ T_n $ is given by 
\begin{equation*} 
(T_n f)(P) := 
\frac{1}{\sqrt{N(n)}} 
\sum_{\gamma \in \Gamma \setminus \mathcal{M}_n} f(\gamma P). 
\end{equation*} 
Here $ N(n) = |n|^2 $ is the norm of $ n \in \mathcal{O}_K $. 
Furthermore, the action of
$
\mbox{GL}_2(\C) 
:= 
\{M \in \mbox{$\mbox{M}_2(\C)$}: \, \mbox{det} \, 
M \not= 0 \} $ on $ \mathbbm{H}^3 $ is given as follows. If 
$ 
M = 
\left(\begin{smallmatrix}
a & b \\
c & d
\end{smallmatrix}\right) 
\in \mbox{\textup{GL}}_2(\C) $ and $ P \in \mathbbm{H} $, 
we set $ q := \sqrt{\mbox{\textup{det}} \, M } $ and 
$$ 
MP := q^{-1} (aP + b)(cP + d)^{-1} q,
$$
with the inverses being taken in the skew field of quaternions. 
The theory for Hecke operators for Bianchi groups is developed in 
\cite{hei}. 
However, in contrast to Heitkamp we have incorporated the factor 
$ 1/\sqrt{N (n)} $ in the definition of the Hecke operator. 

The Fourier expansion of a cusp form is given in \cite[Lemma 16.1]{hei}. Let $f$ be an automorphic form with eigenvalue $\lambda = 1-s^2$. Then we have the Fourier expansion at the cusp $\infty$ given by
\begin{equation}
\label{eq:fourier-expansion-cusp}
f(z+rj) = \phi(r,s) + \sum_{0 \neq \mu  \in \mathfrak{o}^2} \rho_f(\mu) r K_s(2\pi |\tilde{\mu}|r) e(\langle \tilde{\mu}, z\rangle), 
\end{equation}
where 
$\tilde{\mu} = (2/\sqrt{d_K})\bar{\mu}$. If the form is $L^2$-normalized, as it will be in this article, then we can deduce the value of its first coefficient $\rho_f(1)$ by the Rankin-Selberg method: 
\begin{equation}
\label{eq:coeff-1}
|\rho_f(1)|^{-2} = \frac{|d_K|^{1/2}}{4}\Lambda(1, \mathrm{sym}^2 f), 
\end{equation}
which should be compared to \cite[Korollar 17.6]{hei}, noting that the convention therein is the arithmetic normalization $\rho_f(1)=1$ and therefore a non-$L^2$-normalized form. We emphasize that the numerator $32\pi^4$ in \cite[p. 125, last equation]{hei} should instead be~$128\pi^4$.

\subsection{Preliminary results}

A central tool  in our method is the spectral expansion and the Parseval equality, which reads as follows \cite[Section 6.3, Theorem 3.4]{emg}.
\begin{lem}[Spectral expansion]
\label{lem:spectral-expansion}
For any $\psi \in L^2(\Gamma \backslash \mathbb{H}^3)$, we have
\begin{equation}
\psi = \frac{\langle \psi, 1\rangle}{\mathrm{vol}(\Gamma \backslash \mathbb{H}^3)} + \sum_m \langle \psi, u_m \rangle u_m + \frac{1}{\pi \sqrt{|d_K|} |\mathcal{O}_K^\times|} \int_{\mathbb{R}} \langle \psi, E(\cdot, it)\rangle E(\cdot, it)dt, 
\end{equation}
where the sum is over a Hecke orthonormal basis $(u_m)_m$ of cusp forms. 
\end{lem}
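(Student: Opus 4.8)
The plan is to obtain the identity as the Parseval equality attached to the $L^2$-spectral decomposition of $-\Delta$ on $\mathcal{M}_\Gamma=\Gamma\backslash\mathbb{H}^3$, whose underlying spectral theorem is the content of \cite[Section 6.3]{emg}; the real work lies in organizing the three orthogonal pieces and in pinning down the normalization of the continuous part. First I would split $L^2(\Gamma\backslash\mathbb{H}^3)$ orthogonally into the \emph{discrete} subspace, spanned by genuine $L^2$-eigenfunctions of $-\Delta$, and the \emph{continuous} subspace carried by the Eisenstein series. Since $\Gamma$ is cofinite with a single cusp and $H_K=1$, the discrete spectrum consists of the cusp forms together with the residual spectrum; the latter comes solely from the pole of $E(P,s)$ at $s=1$, inherited from the simple pole of $\zeta_K$ through $\phi(s)$. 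The residue there is a constant function with eigenvalue $\lambda=1-1^2=0$, so the residual space is one-dimensional and spanned by $1$.

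Second, on the cuspidal subspace the resolvent of $-\Delta$ is compact — controlling the constant term along the cusp and invoking a Rellich-type argument on the truncated domain — so $-\Delta$ acts with purely discrete spectrum of finite multiplicity. Diagonalizing simultaneously with the self-adjoint commuting Hecke operators $T_n$ produces the Hecke orthonormal basis $(u_m)_m$, and the projection onto the discrete subspace reads
\[
\psi_{\mathrm{disc}} = \frac{\langle\psi,1\rangle}{\mathrm{vol}(\Gamma\backslash\mathbb{H}^3)} + \sum_m \langle\psi,u_m\rangle\,u_m,
\]
the first term being the projection onto the normalized constant $1/\sqrt{\mathrm{vol}(\Gamma\backslash\mathbb{H}^3)}$.

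Third, the continuous part is reconstructed from the Eisenstein series: any $\psi$ orthogonal to the discrete spectrum satisfies $\psi=c\int_{\mathbb{R}}\langle\psi,E(\cdot,it)\rangle E(\cdot,it)\,dt$, where $E(\cdot,it)$ sweeps the continuous spectrum parametrized by the critical line $s=it$, and $c$ is an explicit Plancherel density. The value of $c$ is the delicate point: it is fixed by the normalization of $E$ chosen in this paper (recall $E=\tfrac{|\mathcal{O}_K^\ast|}{2}E_\infty$) and by the constant term $r^{1+s}+\phi(s)r^{1-s}$ of its Fourier expansion. I would extract it from the Maass--Selberg relation, which evaluates $\langle E(\cdot,s),E(\cdot,s')\rangle$ over a truncated fundamental domain in terms of $\phi$ and the functional equation $\phi(s)\phi(-s)=1$; passing to the limit $s'\to s$ along $s=it$ yields a density equal to $\tfrac{1}{\pi\sqrt{|d_K|}\,|\mathcal{O}_K^\times|}$, consistent with the explicit scattering coefficient $\phi(s)=\tfrac{2\pi}{s\sqrt{|d_K|}}\tfrac{\zeta_K(s)}{\zeta_K(1+s)}$.

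The main obstacle is precisely this constant $c$: the existence, orthogonality, and completeness of the three pieces is standard spectral theory, but the coefficient in front of the Eisenstein integral is convention-dependent and requires a careful Maass--Selberg computation together with a reconciliation of the normalization of $E$ used throughout with that of \cite{emg}. Everything else reduces to bookkeeping once the abstract decomposition is in place.
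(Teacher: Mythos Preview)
Your proposal is a reasonable sketch of the underlying spectral theory, but it is far more than what the paper actually does: the paper gives no proof of this lemma at all. It simply quotes the result from \cite[Section~6.3, Theorem~3.4]{emg} and moves on. So there is nothing to compare your argument against; the authors treat the spectral expansion (including the explicit Plancherel constant $\tfrac{1}{\pi\sqrt{|d_K|}\,|\mathcal{O}_K^\times|}$) as a black-box citation.

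If you want your write-up to match the paper, replace the entire proposal with a one-line reference to \cite{emg}. If instead you intend to supply the proof the paper omits, your outline is on the right track, and the Maass--Selberg computation you flag as the ``main obstacle'' is indeed the only nontrivial step specific to this normalization.
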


We will need to explain the projections occurring when applying this spectral expansion to $f^2g$ as in the statement of Theorem \ref{thm}, and these involve triple inner products of cusp forms. To control these terms, we appeal to the Watson-Ichino formula and use more precisely the following uniform bound.

\begin{lem}[Watson-Ichino formula]
\label{lem:watson}
There is an absolute constant $C$ such that for any $\phi_1, \phi_2, \phi_3$ three spherical Hecke-Maa{\ss} forms, which we consider as spherical vectors in the associated automorphic representations $\pi_1, \pi_2, \pi_3$, we have 
\begin{equation}
\label{eq:watson}
|\langle \phi_1, \phi_2 \phi_3 \rangle|^2  := \left| \int_{\Gamma \backslash G } \phi_1 \phi_2 \phi_3 \right|^2 = \frac{C}{8\pi} \frac{\Lambda(\tfrac12, \pi_1 \otimes \pi_2 \otimes \pi_3)}{\prod_i \Lambda(1, \mathrm{sym}^2 \pi_i)}.
\end{equation}
\end{lem}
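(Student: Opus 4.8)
The plan is to deduce the identity from Ichino's general triple product formula, after translating the classical period into adelic language, so that the only genuinely new work is the computation of a single local integral at the complex archimedean place. First I would adelize. Since $K$ has class number $H_K=1$ and a single archimedean place, which is complex, the adelic quotient $\mathrm{PGL}_2(K)\backslash\mathrm{PGL}_2(\mathbb{A}_K)$ taken modulo the maximal compact subgroup $\prod_{v<\infty}\mathrm{PGL}_2(\mathcal{O}_v)\times\mathrm{PSU}(2)$ is a single copy of $\Gamma\backslash\mathbb{H}^3$. Each $L^2$-normalized spherical Hecke-Maa{\ss} form $\phi_i$ then lifts to a pure tensor $\phi_i=\otimes_v\phi_{i,v}$ generating $\pi_i$, where $\phi_{i,v}$ is the normalized spherical vector at every finite place and the spherical vector in the spherical principal series of $\mathrm{PGL}_2(\mathbb{C})$ with spectral parameter $s_i$ at the infinite place. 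Under this dictionary the classical integral $\int_{\Gamma\backslash\mathbb{H}^3}\phi_1\phi_2\phi_3$ becomes the global trilinear period $I(\phi_1,\phi_2,\phi_3)=\int_{[\mathrm{PGL}_2]}\phi_1\phi_2\phi_3$, up to a constant coming from the comparison of Haar measures.

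With this in hand I would invoke Ichino's identity for decomposable vectors, which expresses the normalized squared period as
\[
\frac{|I(\phi_1,\phi_2,\phi_3)|^2}{\prod_i\langle\phi_i,\phi_i\rangle}
=C_0\,\frac{\Lambda(\tfrac12,\pi_1\otimes\pi_2\otimes\pi_3)}{\prod_i\Lambda(1,\mathrm{sym}^2\pi_i)}\prod_v I_v^*,
\]
where $C_0$ depends only on the measure normalization and $I_v^*$ is the local trilinear integral at $v$ divided by the corresponding ratio of local $L$-factors, normalized so that $I_v^*=1$ at almost all places. Since the forms are $L^2$-normalized, the denominators $\langle\phi_i,\phi_i\rangle$ equal $1$.

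It then remains to evaluate the local factors. At every finite place the data is unramified with trivial central character and $\phi_{i,v}$ is spherical, so $I_v^*=1$ by the standard unramified computation that is part of Ichino's theorem. The only surviving factor is the archimedean one $I_\infty^*$, and verifying that it is an absolute constant is the crux of the argument. Realizing the spherical vectors through the radial $K$-Bessel functions $rK_{s}(\cdot)$ of the Fourier expansion \eqref{eq:fourier-expansion-cusp}, the archimedean integral $I_\infty$ reduces to a radial integral of a product of three $K$-Bessel functions with parameters $s_1,s_2,s_3$. The classical closed-form evaluation of such a triple Bessel integral, of Barnes--Gustafson type, yields a product of Gamma functions that coincides precisely with the archimedean $L$-factors packaged inside $\Lambda(\tfrac12,\pi_1\otimes\pi_2\otimes\pi_3)$ and $\prod_i\Lambda(1,\mathrm{sym}^2\pi_i)$. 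Hence the parameter dependence cancels after normalization and $I_\infty^*$ is constant in the $s_i$. This is the step I expect to be the main obstacle; it can be carried out directly or cited from the known archimedean computations at the complex place.

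Finally I would absorb $C_0$, the measure-comparison constant from adelization, and the archimedean constant $I_\infty^*$ into a single absolute constant $C$, matching the prefactor $\tfrac{1}{8\pi}$ against the explicit normalizations used here (the factor $\tfrac12|\mathcal{O}_K^*|$ relating $E$ and $E_\infty$, and the value \eqref{eq:coeff-1} of $\rho_f(1)$). This yields the stated identity \eqref{eq:watson} with $C$ independent of $\phi_1,\phi_2,\phi_3$.
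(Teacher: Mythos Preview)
Your high-level plan coincides with the paper's: pass to the adelic setting, invoke Ichino's formula so that only the archimedean local factor remains, and then compute that factor explicitly.  The divergence, and the gap, is in how you propose to carry out the archimedean step.

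You assert that the local integral $I_\infty$ ``reduces to a radial integral of a product of three $K$-Bessel functions'' admitting a Barnes--Gustafson closed form.  This is not how the computation goes.  The local Ichino integral is $\int_G\prod_i\langle\pi_i(g)\phi_i,\phi_i\rangle\,dg$, an integral of matrix coefficients; for spherical principal series of $\mathrm{PGL}_2(\C)$ these matrix coefficients are spherical functions, not $K$-Bessel functions, and the resulting integral does not take the shape you describe.  The $K$-Bessel functions you invoke from the Fourier expansion are Whittaker functions, which live in a different model.  Moreover, a Mellin integral of a product of \emph{three} $K$-Bessel functions does not in general collapse to a finite product of Gamma values; it is the \emph{two}-Bessel Mellin integral that does.

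The paper's device is exactly to avoid a three-Bessel integral.  Following Michel--Venkatesh and Marshall, one rewrites the matrix-coefficient integral as $|\mathcal{T}|^2/(8\pi)$ with $\mathcal{T}$ a Rankin--Selberg integral in which $\pi_1,\pi_2$ are realized in their Whittaker models but $\pi_3$ is placed in its induced model, contributing only the factor $y^{1+it_3}$.  After a $K$-invariance argument one is left with the Mellin transform of a product of \emph{two} $K$-Bessel functions, evaluated by the Gradshteyn--Ryzhik identity, and this produces precisely the eight Gamma factors completing the triple product $L$-function.  This asymmetric treatment of the three representations is the point your sketch is missing; without it the archimedean computation you outline would not close.
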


\begin{proof}
This is in line with \cite[(6.5)]{nicole}, with the precise computation of the ``constants" appearing therein. See also the very recent paper \cite{nelson_subconvexity_2025} explaining bounds on triple products in the real case of the quantum unique ergodicity conjecture. 

For each $i \in \{1, 2, 3\}$, let $\pi_i$ be a cuspidal spherical representation in which $\phi_i \in \pi_i$  is a spherical vector. By the Ichino formula \cite[Theorem 4]{marshall_triple_2014}, there is an absolute constant $C$ such that 
\begin{equation}
\label{wi}
\left| \int_{\Gamma \backslash G } \phi_1 \phi_2 \phi_3 \right|^2 = C \int_G \prod_{i=1}^3 \langle \pi_i(g) \phi_i, \phi_i \rangle dg \frac{L(\tfrac12, \pi_1 \otimes \pi_2 \otimes \pi_3)}{\prod_i L(1, \mathrm{sym}^2 \pi_i)}.
\end{equation}
The result therefore reduces to proving that the above integral over $G$ of product of matrix coefficients is, up to a constant, equal to the Gamma factors occurring in the completed $L$-functions \eqref{eq:watson}. 

We embed $\pi_1$ and $\pi_2$ in their respective Whittaker models: denote by $\mathcal{W}(\pi_i, \psi)$ the Whittaker model of $\pi$ with respect to the character $\psi$ \cite[Section II.2.8]{bump}. Let $W_i \in \mathcal{W}(\pi_i, \psi)$ be a Whittaker function corresponding to $\phi_i$.
We embed $\pi_3$ in its induced model $\mathcal{I}_3$ as explained in \cite[Theorem II.2.7.1]{bump}, and denote $f_3 \in \mathcal{I}_3$ the vector corresponding to $\phi_3$, which is given by  \cite[(5.13) and (5.22)]{bump}, viz.
$$
f_3(n(x)a(y)k) = x^{s_1+s_2} y^s \phi_3(k) = y^{1+it} f_3(k)
$$
for all $g = n(x)a(y)k$ decomposed according to the Iwasawa decomposition of $G$, where $n(x)$ is the unipotent matrix with upper-right entry $x$ and $a(y)$ is the diagonal matrix parametrized by $y$. Here, $\lambda = \sigma(1-\sigma)$ relates the spectral parameter $\sigma$ to the eigenvalue $\lambda$, where $\sigma = \tfrac12(s_1 - s_2 + 1) = 1+it$. 
By using \cite[Lemma 3.4.2]{michel_subconvexity_2010} also stated in \cite[Proposition 7]{marshall_triple_2014}, we can express in these terms the sought integral: 
$$
\int_G \prod_{i=1}^3 \langle \pi_i(g) \phi_i, \phi_i \rangle dg = \frac{|\mathcal{T}|^2}{8\pi} 
$$
where
\begin{equation}
\label{eq:T-def}
\mathcal{T} := \int_0^\infty \int_K W_1(a(y)k) \overline{W_2}(a(y)k) y^{1+it} f_3(k) y^{-2} dk d^\times y.
\end{equation}

We now follow the strategy used in \cite[Proposition 6]{marshall_triple_2014} to deal with the integral over $K$. We examine the action of the central torus $M=K\cap Z$ on $W_i$ and $f_3$, for which these are eigenfunctions. Concerning $W_i$, the left action under $K$ is given in \cite[Proposition II.2.8.1]{bump} (it has a weight $k_i$ in the notations therein), and the right action under $Z$ is given in \cite[p. 244]{bump} for diagonal matrices (it has weight $\mu_i$); therefore the action under $M = K\cap Z$ has necessarily weight $k_i=\mu_i$. As for $f_3$, it has a given weight on both sides by the explicit computations from \cite[(5.22)]{bump}. 

The vector $W_i$ (resp. $f_3$) has a certain weight $w_i$ under the right action of $M$,  therefore we require $w_1 - w_2 + w_3 = 0$ for the expression \eqref{eq:T-def} to be nonzero, otherwise a change of variable $k \mapsto km$ does show that the integral over $K$ vanishes. This means that  the function $W_1(a(y)k) \overline{W_2}(a(y)k) f_3(k)$ is $K$-invariant and therefore the value of the integrand is constant, thus the integral is equal to the value of this function at e.g. $k=\mathrm{id}$. We therefore obtain
\begin{equation}
\mathcal{T} = \int_0^\infty W_1(a(y)) \overline{W_2}(a(y)) y^{-1+it} d^\times y.
\end{equation}

We appeal to the explicit knowledge of the Whittaker functions in our case \cite[Proposition 8, with weights all zero and therefore $m=0$]{marshall_triple_2014}, which is also given in \cite[(22), in the case $k=0$]{marshall_triple_2014}. We have 
\begin{equation}
W_i(a(y)) = |\Gamma(1+it_i)|^{-1} y K_{it_i}(4\pi y).
\end{equation}

We input these values in $\mathcal{T}$ and use the inversion formula  given in \cite[6.576]{gr} to obtain
\begin{equation}
\label{eq:K-formula}
\int_0^\infty y^\lambda K_\mu(y) K_\nu(y) dy = \frac{1}{\Gamma(\lambda+1)} \prod_{\pm, \pm} \Gamma\left( \frac{1 + \lambda \pm \mu \pm \nu}{2}\right), 
\end{equation}
where the product is over both $\pm$ and therefore has four factors. All in all, we get
\begin{equation*}
\mathcal{T} = \frac{\Gamma(\tfrac{1 + it + it_1 + it_2}{2})\Gamma(\tfrac{1+ it + it_1 - it_2}{2})\Gamma(\tfrac{1 + it - it_1 + it_2}{2})\Gamma(\tfrac{1+ it - it_1 - it_2}{2})}{|\Gamma(1+it_1)\Gamma(1+it_2)|\Gamma(1+it)}.
\end{equation*}
Therefore, $|\mathcal{T}| = \mathcal{T}\overline{\mathcal{T}}$ exactly features the eight Gamma factors needed to complete the $L$-functions arising in \eqref{wi}, finishing the proof.
\end{proof}


Triple products involving Eisenstein series will also occur when spectrally expanding $f^2g$ in Theorem \ref{thm}, and these can be controlled by appealing to the unfolding technique.

\begin{lem}[Rankin-Selberg unfolding]
\label{lem:rankin-selberg}
For every $t \in \mathbb{R}$, denote $E_t := E(\cdot, it)$. For every $t, s \in \mathbb{R}$ and every cusp forms $u_k$ and $g$, we have
\begin{align}
\left\langle u_kg, E_t \right\rangle &= \frac{\rho_k(1)\rho_g(1) \Lambda(\frac{1+it}{2}, g \times u_k)}{2\Lambda(1+it)},  \\ 
\left\langle E_tg, E_s \right\rangle &= \frac{\rho_t(1)\rho_g(1) \Lambda(\frac{1+is+it}{2}, g)\Lambda(\tfrac{1+is-it}{2}, g)}{2\Lambda(1+it)}.
\end{align}
\end{lem}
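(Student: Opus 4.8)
The plan is to establish both identities by the classical Rankin--Selberg unfolding, exploiting the defining series \eqref{eq:eisenstein-series-def} of the Eisenstein series to collapse the quotient $\Gamma\backslash\mathbb{H}^3$ onto the cusp neighbourhood $\Gamma'_\infty\backslash\mathbb{H}^3$. For the first identity I write $\langle u_kg, E_t\rangle$ as an integral over $\Gamma\backslash\mathbb{H}^3$ against a constant multiple of $\sum_{\gamma\in\Gamma'_\infty\backslash\Gamma}r(\gamma P)^{1+it}$ and unfold, obtaining the integral of $u_k\,g\,r^{1+it}$ over $\{z+rj:\ z\in\C/\mathcal O_K,\ r>0\}$ against the measure \eqref{hypmeasure}. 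Into this I substitute the Fourier expansions \eqref{eq:fourier-expansion-cusp} of $u_k$ and $g$; since both are cuspidal their constant terms vanish, so only the $K$-Bessel terms survive, and integrating over $z\in\C/\mathcal O_K$ using orthogonality of the additive characters $e(\langle\tilde\mu, z\rangle)$ forces the two frequencies to be opposite. Writing $\rho_k(\mu)=\rho_k(1)\lambda_k(\mu)$ and $\rho_g(\mu)=\rho_g(1)\lambda_g(\mu)$ for the Hecke-normalised coefficients, what remains is $\rho_k(1)\rho_g(1)$ times a Dirichlet series $\sum_{\mu}\lambda_k(\mu)\lambda_g(\mu)\,|\tilde\mu|^{-1-it}$ in the Hecke eigenvalues, multiplied by the radial integral $\int_0^\infty K_{it_k}(y)K_{it_g}(y)\,y^{it}\,dy$, where $it_k,it_g$ denote the spectral parameters of $u_k,g$.

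The radial integral is evaluated by the Bessel identity \eqref{eq:K-formula} already established in the proof of \Cref{lem:watson}: it produces precisely the four $\Gamma$-factors that, up to powers of $2\pi$ and $|d_K|^{1/2}$, form the archimedean part of the completed Rankin--Selberg $L$-function $\Lambda(\tfrac{1+it}{2}, g\times u_k)$, together with a leftover factor $\Gamma(1+it)^{-1}$. On the arithmetic side, the Hecke multiplicativity relations over $\mathcal O_K$ (where, since $H_K=1$, a sum over nonzero $\mu$ is a sum over integral ideals weighted by $|\mathcal O_K^\times|$) give $\sum_{\mathfrak a}\lambda_k(\mathfrak a)\lambda_g(\mathfrak a)\,N\mathfrak a^{-w}=L(w,g\times u_k)/\zeta_K(2w)$ with $w=\tfrac{1+it}{2}$; the factor $\zeta_K(2w)=\zeta_K(1+it)$ in the denominator combines with the leftover $\Gamma$-factor and the surviving archimedean constants into $\Lambda(1+it)^{-1}$. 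Assembling the pieces, and tracking every constant — the factor $\tfrac{|\mathcal O_K^\times|}{2}$ relating $E$ and $E_\infty$, the covolume of $\C/\mathcal O_K$, the $\tilde\mu$-versus-$\mu$ normalisation, the conjugation convention on the Eisenstein series — yields the stated identity with the clean constant $\tfrac12$. The identity is first obtained for $t$ in a strip of absolute convergence and extended to all real $t$ by meromorphic continuation.

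The second identity is proven the same way, except that the Eisenstein series that is not being unfolded is now replaced by its full Fourier expansion \eqref{eq:FE_Eisenstein_H3} rather than its defining series; its nonzero Fourier coefficients carry the normalising factor proportional to $(\Gamma(1+it)\zeta_K(1+it))^{-1}$ — equivalently $\rho_t(1)\propto\Lambda(1+it)^{-1}$ — which is the ultimate source of the denominator $\Lambda(1+it)$. The same steps (vanishing of the constant term of $g$, character orthogonality in $z$, the Bessel integral \eqref{eq:K-formula}) leave $\rho_t(1)\rho_g(1)$ times a Dirichlet series $\sum_{\mu}|\mu|^{it}\sigma_{-it}(\mu)\,\lambda_g(\mu)\,N(\mu)^{-w}$; since the coefficients $|\mu|^{it}\sigma_{-it}(\mu)$ are the Hecke eigenvalues of an \emph{isobaric} automorphic form (the unitary Eisenstein series), this convolution factors through the Euler product into a product of two shifts $L(\tfrac{1+is+it}{2},g)\,L(\tfrac{1+is-it}{2},g)$, and the four $\Gamma$-factors from \eqref{eq:K-formula} complete each of them into the displayed $\Lambda(\tfrac{1+is\pm it}{2},g)$. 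The constant $\tfrac12$ again emerges after the same bookkeeping.

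The main obstacle throughout is not the unfolding itself, which is routine, but the precise matching of archimedean factors and the shepherding of constants: one has to identify the $\Gamma$-quotients coming out of \eqref{eq:K-formula} with the exact archimedean factors of the completed $L$-functions over $K$ (which has a single complex place), reconcile them with the $\zeta_K$-values produced by the Hecke relations and by the Eisenstein normalisation \eqref{eq:FE_Eisenstein_H3} — invoking the functional equation of $\zeta_K$ and the scattering relation $E(\cdot,s)=\phi(s)E(\cdot,-s)$ wherever the arguments do not line up on the nose — and keep careful track of every power of $2\pi$, $|d_K|^{1/2}$ and $|\mathcal O_K^\times|$, together with the Rankin--Selberg normalisation \eqref{eq:coeff-1} of the first Fourier coefficient. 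It is this accounting, rather than any conceptual step, that pins down the exact shape of the completed $L$-functions and the constant $\tfrac12$ in the statement.
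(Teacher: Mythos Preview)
Your proposal is correct and follows essentially the same route as the paper: unfold one Eisenstein series via its defining sum, insert the Fourier expansions \eqref{eq:fourier-expansion-cusp} (and \eqref{eq:FE_Eisenstein_H3} for the second identity), use character orthogonality in $z$ to collapse to a single frequency, evaluate the radial Bessel integral by \eqref{eq:K-formula}, and identify the resulting Dirichlet series as $L(w,g\times u_k)/\zeta_K(2w)$ (respectively a product of two shifted $L$-functions of $g$) before repackaging everything into completed $\Lambda$-functions. Your explicit mention of analytic continuation from a strip of absolute convergence is a point the paper leaves implicit.
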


\begin{rem}
This is the analogue of the Watson-Ichino formula when  some cuspidal Hecke-Maa{\ss} forms are replaced by Eisenstein series. Taking the squared modulus of the above inner product and using \eqref{eq:coeff-1}, we indeed obtain 
\begin{align}
|\left\langle u_kg, E_t \right\rangle|^2 & =  \frac{ 4 \Lambda(\tfrac12, E_t \times g \times u_k)}{|d_K||\Lambda(1+it)|^2\Lambda( 1, \mathrm{sym}^2 g)\Lambda(1, \mathrm{sym}^2 u_k)}, \\ 
|\left\langle E_tg, E_\tau \right\rangle|^2 & =  \frac{ 4\Lambda(\tfrac12, E_t \times E_\tau \times g)}{|d_K||\Lambda(1+it) \Lambda (1+i\tau)|^2 \Lambda(1, \mathrm{sym}^2 g)}.
\end{align} 
\end{rem}

\begin{proof}
We start by $\langle u_kg,  E_t \rangle$, which we will unfold by inputting the Fourier expansion of each cusp form and Eisenstein series. This generalizes \cite[Section 17.2]{hei}, where the computation is restricted to the case $u_k = g$.  By using the Fourier expansions \eqref{eq:fourier-expansion-cusp} of $u_k$ and $g$, and by denoting $t_k$ and $t_g$ their respective spectral parameters, we have 
\begin{align}
\langle u_k, g E_t \rangle & = \int_F \left( \sum_{0 \neq \mu \in \mathfrak{o}^2} \rho_k(\mu) r K_{it_k}(2\pi |\tilde{\mu}| r)e(\langle \tilde{\mu}, r \rangle)\right) \\
&  \quad \times\left( \sum_{0 \neq \nu \in \mathfrak{o}^2} \rho_g(\nu) r K_{it_g}(2\pi |\tilde{\nu}| r)e(\langle \tilde{\nu}, r \rangle)\right) E_t(z+rj) dz r^{-3} dr.
\end{align}
By replacing the Eisenstein series $E_t$ by its definition \eqref{eq:eisenstein-series-def} and unfolding the integral, we get
\begin{align}
\langle u_k, g E_t \rangle & = \int_0^\infty \int_{P_{\mathfrak{o}}} \left( \sum_{0 \neq \mu \in \mathfrak{o}^2} \rho_k(\mu)  K_{it_k}(2\pi |\tilde{\mu}| r)e(\langle \tilde{\mu}, r \rangle)\right) \\
&  \quad \times\left( \sum_{0 \neq \nu \in \mathfrak{o}^2} \overline{\rho_g(\nu) K_{it_g}(2\pi |\tilde{\nu}| r)e(\langle \tilde{\nu}, r \rangle)}	\right)  dz r^{it} dr.
\end{align}
By executing the integration of $e(\langle \tilde{\mu}-\tilde{\nu}, z \rangle)$ over $z$ first, we select only the case where $\mu = \nu$ with a weight $\mathrm{vol}(P_{\mathfrak{o}})$; indeed by \cite[p. 122]{hei} we have
\begin{equation}
\label{orthogonality}
\int_{P_{\mathfrak{o}}} e(\langle \tilde{\mu} - \tilde{\nu}, z \rangle) dx dy = \mathrm{vol}(P_{\mathfrak{o}}) \mathbf{1}_{\nu = \mu}.
\end{equation}
We therefore obtain
\begin{align}
\langle u_k, g E_t \rangle & =  \mathrm{vol}(P_{\mathfrak{o}}) \sum_{0 \neq \mu \in \mathfrak{o^2}} \rho_k(\mu) \overline{\rho_g(\mu)} \int_0^\infty K_{it_k}(2\pi |\tilde{\mu}|r) K_{it_g}(2\pi |\tilde{\mu}| r) r^{it} dt dr \\
& = \left( \frac{2}{\sqrt{d_K}}\right)^t \frac{\mathrm{vol}(P_{\mathfrak{o}})}{4\pi^{t} \Gamma(t+1)}   \prod_{\pm} \Gamma\left( \frac{1+it \pm it_k \pm it_g}{2} \right)  \sum_{0 \neq \mu \in \mathfrak{o^2}} \frac{\rho_k(\mu) \overline{\rho_g(\mu)}}{|\mu|^{1+it}}
\end{align}
where the Mellin transform of product of Bessel functions has been replaced by Gamma factors using \eqref{eq:K-formula}, which is valid under the condition $\Re(t) > |\Re(it_k)| + |\Re(it_g)| = 0$. Note moreover that $\mathrm{vol}(P_{\mathfrak{o}}) = \tfrac12 \sqrt{|d_K|}$. We conclude by using the fact that
\begin{equation}
\sum_{0 \neq \mu \in \mathfrak{o^2}} \frac{\rho_k(\mu) \overline{\rho_g(\mu)}}{|\mu|^{t}} = \rho_k(1) \rho_g(1) \frac{L(t, f \times g)}{\zeta_k(2t)}, 
\end{equation}
and by the defintion of the completing factors: 
\begin{align*}
\Lambda(s) & = (2\pi/\sqrt{d_K})^{-s} \Gamma(s) \zeta_K(s), \\
\Lambda(t, f \times g) & = (2\pi/\sqrt{d_K})^{-4t} \prod_{\pm, \pm}\Gamma(t\pm\tfrac{it_f}{2}\pm\tfrac{it_g}{2}) L(t, f \times g).
\end{align*}

We proceed similarly in the case of two Eisenstein series. By inputting the Fourier expansions of the cusp form \eqref{eq:fourier-expansion-cusp} and one Eisenstein series \eqref{eq:FE_Eisenstein_H3}, while using the definition of the second Eisenstein series to unfold the integration domain, we get
\begin{align*}
\langle fE_t, E_s \rangle & = \int_0^\infty \int_{P_{\mathfrak{o}}} \left( r \sum_{0 \neq \mu} \rho_f(\mu) K_{it}(2\pi |\tilde{\mu}|r) e(\langle \tilde{\mu}, z \rangle) \right) \\
& \quad \times \left( r^{is} + \phi(is) r^{2-is} + \frac{2r}{\xi_K(s)}\sum_{0 \neq m} |m|^{is-1} \sigma_{1-is}(m) K_{is-1}(2\pi |\tilde{m}| r) e(\langle \tilde{m}, z\rangle) \right) r^{it-3} dzdr
\end{align*}

By executing the integration of $e(\langle \tilde{\mu}-\tilde{m}, z \rangle)$ over $z$ first and using \eqref{orthogonality}, we  only pick up the terms where $\mu = m$ with a weight $\mathrm{vol}(P_{\mathfrak{o}})$, and this in particular removes the constant terms of the Eisenstein series' expansion. We are therefore left with
\begin{align*}
\langle fE_t, E_s \rangle & = \frac{2\mathrm{vol}(P_{\mathfrak{o}})}{\xi_K(is)}  \sum_{0 \neq \mu} \rho_f(\mu) |\mu|^{is-1} \sigma_{1-is}(\mu)  \int_0^\infty  K_{it}(2\pi |\tilde{\mu}|r)   K_{is-1}(2\pi |\tilde{\mu}| r)  r^{it-1} dr.
\end{align*}

Appealing again to the explicit Mellin transform of products of Bessel functions in terms of Gamma factors, we obtain
\begin{align}
\langle E_t f, E_s \rangle & = \rho_f(1)2^{2-(1-it)}  \frac{\sqrt{|d_K|}}{2} \frac{|\mathcal{O}^\times|^2}{\xi_K(it)\xi_K(is)}  \left( \frac{4\pi}{\sqrt{|d_K|}}\right)^{-it} \\
& \quad \times  \frac{L(\frac{it-is+1}{2}, f)L(\frac{it+is+1}{2}, f)}{\Gamma(it)} \prod_{\pm, \pm}  \Gamma\left(\frac{it\pm (is-1)\pm it_f -1}{2}\right) \notag
\end{align}
where we can read off the L-factors as well as the completing gamma factors.
\end{proof}

We recall the Stirling formula that will be used to estimate the Archimedean parts of the L-functions arising from Watson-Ichino formula and the Rankin-Selberg method.
\begin{lem}[Stirling formula]
\label{lem:stirling}
For bounded $x \in \R$, as $y \to \infty$, we have 
\begin{equation}
|\Gamma(x+iy)| \asymp (1+|y|)^{x-\frac{1}{2}} e^{-\frac{\pi}{2}|y|}.
\end{equation}
\end{lem}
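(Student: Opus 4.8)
The plan is to deduce this from the classical Stirling asymptotic expansion of the log-Gamma function, namely
\[
\log\Gamma(z) = \left(z-\tfrac12\right)\log z - z + \tfrac12\log(2\pi) + O\!\left(\tfrac{1}{|z|}\right),
\]
valid uniformly as $|z|\to\infty$ in any fixed sector avoiding a neighbourhood of the negative real axis. First I would substitute $z = x+iy$ with $x$ ranging over a fixed compact set and $|y|\to\infty$: then $|z| = (x^2+y^2)^{1/2} \asymp 1+|y|$, and $\arg z \to \pm\tfrac\pi2$ stays bounded away from $\pm\pi$, so the expansion applies with error $O(1/|y|)$. (Recall that $|\Gamma(x+iy)| = |\Gamma(x-iy)|$, so there is no loss in working with $|y|$.)

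Next I would take real parts. Writing $\log(x+iy) = \log(iy) + \log\!\left(1+\tfrac{x}{iy}\right) = \log|y| + i\,\mathrm{sgn}(y)\tfrac\pi2 + O(1/|y|)$, a direct computation gives
\[
\Re\!\left[\left(x-\tfrac12+iy\right)\log(x+iy)\right] = \left(x-\tfrac12\right)\log|y| - \tfrac\pi2|y| + O(1),
\]
where one uses that $\Re(iy\log|y|)=0$, that $\Re\!\left(iy\cdot i\,\mathrm{sgn}(y)\tfrac\pi2\right) = -\tfrac\pi2|y|$, and that multiplying the $O(1/|y|)$ error in $\log z$ by the factor $iy$ produces only an admissible $O(1)$. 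Combining this with the remaining $-x$ and $\tfrac12\log(2\pi)$ terms, and replacing $\log|y|$ by $\log(1+|y|)$ (at the cost of $O(1/|y|)$, since $x$ is bounded and $|y|$ is large), yields $\log|\Gamma(x+iy)| = (x-\tfrac12)\log(1+|y|) - \tfrac\pi2|y| + O(1)$; exponentiating gives the claimed two-sided bound, with implied constants depending only on the compact range of $x$.

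There is no serious obstacle here; the only point deserving care is the bookkeeping of error terms, in particular the observation just made that the $iy$ factor multiplying the $O(1/|y|)$ remainder contributes an acceptable $O(1)$, and that the switch from $\log|y|$ to $\log(1+|y|)$ is harmless in the regime $|y|\to\infty$. Alternatively, one could bypass the full Stirling expansion: starting from the exact identities $|\Gamma(iy)|^2 = \pi/(y\sinh\pi y)$ and $|\Gamma(\tfrac12+iy)|^2 = \pi/\cosh\pi y$, together with the recurrence $\Gamma(z+1)=z\Gamma(z)$ and the Hadamard three-lines theorem applied to $\log\Gamma$ on a vertical strip, one recovers the same order of magnitude; but invoking the standard Stirling estimate is by far the most economical route.
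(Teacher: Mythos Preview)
Your proof is correct and is the standard derivation of this classical estimate. Note, however, that the paper does not actually supply a proof of this lemma: it is simply stated as a well-known fact (``We recall the Stirling formula\ldots'') and used as a black box in the subsequent estimates. So there is no proof in the paper to compare against; your argument is a perfectly good justification of the stated bound.
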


We will use uniform bounds on Eisenstein series, and we prove the following.
\begin{lem}[Uniform bounds on Eisenstein series]
\label{lem:eisenstein-bound}
For any compact $\Omega \subset \mathbb{H}^3$ and any $\epsilon>0$, we have
\begin{equation}
\label{eq:Eisenstein-sup-bound}
    \sup_{z \in \Omega} \;| E(z,it)|\ll_{\Omega, \epsilon} (1+|t|)^{1+\epsilon} .
\end{equation}
\end{lem}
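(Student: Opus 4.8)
The plan is to bound $E(z,it)$ pointwise on $\Omega$ by exploiting the Fourier expansion \eqref{eq:FE_Eisenstein_H3} together with standard bounds on the Dedekind zeta function on the line $\Re(s)=1$. First I would reduce to a region where $r$ is bounded above and below: since $\Omega$ is compact in $\mathbb{H}^3$, there exist $0 < r_0 \le r_1$ with $r_0 \le r \le r_1$ for all $z+rj \in \Omega$, so the archimedean factors $r^{1\pm it}$ are $O_\Omega(1)$, and the Bessel functions $K_{it}(\tfrac{4\pi|\omega|r}{\sqrt{|d_K|}})$ decay exponentially once $|\omega| \gg (1+|t|)$ because $K_{it}(x)$ is exponentially small for $x \gg 1+|t|$ while being polynomially bounded for $x$ in the transition range. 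Thus the infinite sum over $\omega$ effectively has length $O((1+|t|)^{1+\epsilon})$, each term contributing a Bessel factor of size at most $(1+|t|)^{O(1)}$ after using the uniform bound $K_{it}(x) \ll_\epsilon (1+|t|)^{-1/3+\epsilon}$ for $x$ in the bulk (or cruder polynomial bounds suffice here).

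Next I would insert the arithmetic estimates. On $\Re(s) = 1$ we have the classical bounds $1/\zeta_K(1+it) \ll \log(2+|t|)$ and, via the convexity bound for the Dedekind zeta function of a fixed field, $\zeta_K(it)$ — appearing through $\phi(it)$ and the normalizing constant — is $\ll_\epsilon (1+|t|)^{1+\epsilon}$ (indeed $\zeta_K$ has degree $2$, so $\zeta_K(\tfrac12 + it) \ll (1+|t|)^{1/2+\epsilon}$ and $\zeta_K(it) \ll (1+|t|)^{1+\epsilon}$ after the functional equation, more than enough for our purposes). The generalized divisor function satisfies $\sigma_{-it}(\omega) \ll_\epsilon |\omega|^\epsilon$, and the factor $1/(|d_K|^{(1+s)/2}\Gamma(1+s)\zeta_K(1+s))$ contributes, using Stirling (\Cref{lem:stirling}), a factor $\Gamma(1+it)^{-1} \ll (1+|t|)^{-1/2} e^{\pi|t|/2}$, which is precisely cancelled by the exponential decay $e^{-\pi|t|/2}$ coming from the Bessel functions. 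Assembling these: the constant term contributes $r^{1+it} + \phi(it) r^{1-it} \ll_\epsilon (1+|t|)^{1+\epsilon}$, and the sum over $\omega$ contributes
\begin{equation*}
(1+|t|)^{-1/2} e^{\pi|t|/2} \cdot (1+|t|)^{1+\epsilon} \cdot e^{-\pi|t|/2} \cdot (1+|t|)^{O(\epsilon)} \ll_\epsilon (1+|t|)^{1+\epsilon},
\end{equation*}
which matches the claimed bound \eqref{eq:Eisenstein-sup-bound}.

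The main obstacle, and the step requiring care, is the uniform treatment of $K_{it}(x)$ across the three relevant ranges of $x$ relative to $1+|t|$: the oscillatory range $x \ll 1+|t|$, the transition (Airy) range $x \asymp 1+|t|$, and the exponentially decaying range $x \gg 1+|t|$. One must extract both the uniform polynomial bound that tames each individual term and the exponential decay in $|t|$ that cancels the $e^{\pi|t|/2}$ from the Gamma factor; a convenient reference is the uniform asymptotics for Bessel functions of imaginary order, or one can simply use the crude but sufficient bounds $|K_{it}(x)| \le e^{-\pi|t|/2} \int_0^\infty e^{-x\cosh u}\,du \ll e^{-\pi|t|/2}(1+|\log x|)$ together with the exponential decay $|K_{it}(x)| \ll e^{-\pi|t|/2} e^{-x/2}$ valid for $x \ge 1$, which already yields everything needed once $r$ is bounded below. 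A final remark: the implied constant depends on $\Omega$ only through $r_0$ and $r_1$, and the argument is uniform in $t$, so the statement holds as claimed; an alternative and shorter route is to invoke the Maass–Selberg relations or known sup-norm bounds for Eisenstein series on $\Gamma \backslash \mathbb{H}^3$, but the Fourier-expansion argument has the advantage of being self-contained given the tools already assembled in this section.
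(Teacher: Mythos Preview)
Your overall strategy matches the paper's --- bound the Fourier expansion \eqref{eq:FE_Eisenstein_H3} term by term using uniform Bessel estimates --- but there is a genuine gap in the counting step. You assert that ``the infinite sum over $\omega$ effectively has length $O((1+|t|)^{1+\epsilon})$'', but $\mathcal{O}_K$ is a rank-two lattice in $\C$, so the number of $\omega\in\mathcal{O}_K$ with $|\omega|\le T$ is $\asymp T^2$, not $T$. With the correct count of $\asymp t^2$ terms, your final assembly gives at best
\[
(1+|t|)^{-1/2}\,e^{\pi|t|/2}\cdot(1+|t|)^{2+\epsilon}\cdot e^{-\pi|t|/2}\cdot(1+|t|)^{-1/3}\ \ll\ (1+|t|)^{7/6+\epsilon},
\]
even granting the uniform transition bound $\cosh(\pi t/2)|K_{it}(u)|\ll t^{-1/3}$. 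The ``crude but sufficient'' inequality you propose at the end is also not correct as written: the integral representation yields only $|K_{it}(x)|\le K_0(x)$, with no factor $e^{-\pi|t|/2}$, so it cannot cancel the growth of $1/|\Gamma(1+it)|$.

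To reach $(1+|t|)^{1+\epsilon}$ one must use the sharper Balogh asymptotic $\cosh(\pi t/2)K_{it}(u)\ll t^{-1/4}(t-u)^{-1/4}$ in the oscillatory range $u<t-Ct^{1/3}$, which for the bulk $u\le t/2$ gives $t^{-1/2}$ per term and hence $t^{-1/2}\cdot t^{-1/2}\cdot t^{2+\epsilon}=t^{1+\epsilon}$; the transition and exponential-decay ranges then contribute smaller amounts, as the paper verifies by a dyadic decomposition together with Shiu's bound on short divisor sums. In short, the three-range splitting you mention in passing is not a technicality that ``cruder polynomial bounds'' can bypass --- it is precisely what recovers the missing factor of $(1+|t|)^{1/2}$ that the two-dimensional lattice costs you.
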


\begin{rem}
 Assing \cite{assing_sup-norm_2019} recently proved a stronger bound on sup-norm for Eisenstein series with exponent $3/8$, matching the quality of \cite{young_note_2018}; however the above lemma is sufficient to our purposes and we provide an elementary proof for completeness.
 \end{rem}

\begin{proof}
    We follow closely \cite{young_note_2018}. Consider the Fourier expansion given in \eqref{eq:FE_Eisenstein_H3} and the relation $E=\frac{|\mathcal{O}_K^*|}{2}E_{\infty}$. After applying the Cauchy-Schwarz inequality, it suffices to bound \[C(t)=\frac{1}{|\Gamma(1+it)||\zeta_K(1+it)|} \cdot\sum_{0 \not= \omega \in \mathcal{O}_K} \sigma_{0}(\omega) 
     \left|K_{it}\left(\frac{4 \pi |\omega| r}{\sqrt{|d_K|}}\right)\right|,  \] the others terms being bounded with respect to $\Omega$. By the reflection formula for the Gamma function we can rewrite it as \[C(t) \ll \frac{1}{|\zeta_K(1+it)|} \cdot \sum_{0 \not= \omega \in \mathcal{O}_K} \sigma_{0}(\omega) 
     \left|K_{it}\left(\frac{4 \pi |\omega| r}{\sqrt{|d_K|}}\right)\right| \frac{\cosh(\pi t/2)}{|1+t|^{1/2}}.\]
     To simplify the notation, denote $L=4\pi r/\sqrt{|d_K|}$ and notice that both $L$ and $1/L$ are bounded in terms of $\Omega$. We need to state uniform asymptotic estimates for the $K$-Bessel function, that we encapsulate in the following lemma. 

\begin{lem}
Let $C \gg 1$. We have 
\begin{align}
\label{eq:KBesselBalogh}
 \cosh(\pi t/2) K_{it}(u) \ll
 \begin{cases}
  t^{-1/4} (t -u)^{-1/4}, \quad  &\text{if }   0 < u < t - C t^{1/3},  \\
  t^{-1/3}, \quad &\text{if } |u-t| \leq C t^{1/3},
 \\ 
u^{-1/4} (u -t)^{-1/4}  \quad  &\text{if }    \tfrac{\pi}{2}t \geq u > t + C t^{1/3}, \\
u^{-1/4} (u -t)^{-1/4} \exp(-u+\pi t/2) \quad & \text{if } u> \tfrac{\pi}{2}t.
 \end{cases}
\end{align}
\end{lem}     

\begin{proof}
To obtain the second estimate, we borrow the expansions from Balogh \cite[Equation (8)]{balogh_asymptotic_1967} in the transition regime $|u-t| \leq C t^{1/3}$, and note that the Airy function therein is uniformly bounded in a neighborhood of zero.

In the regime $0<u<t-Ct^{1/3}$, we  appeal to Erdélyi \cite[7.13.2 (19) with $M=1$]{erdelyi},  noting that the exponential is asymptotically $\cosh(\pi t/2)$, giving the first claimed estimate.

In the regime $u>\tfrac{\pi}{2} t$, we appeal to Erdélyi \cite[7.13.2 (18) with $M=1$]{erdelyi} in order to obtain extra decay.  Asymptotically expanding the argument in the exponential gives that the exponential is $\ll \cosh(\pi t / 2) \exp(-u + \pi t /2)$ as claimed in the fourth stated estimate.

In the regime $\tfrac{\pi}{2} t > u>t+Ct^{1/3}$, we appeal to Erdélyi \cite[7.13.2 (18) with $M=1$]{erdelyi} . Let $x=u/t$. In the range $x \geq 1$, the argument of the exponential is $-t\left(\sqrt{x^2-1}+\arcsin(1/x)\right)$. We apply the identity $\pi/2-\arcsin(1/x)=\arctan\left(\sqrt{x^2-1} \right)$, valid for $x\geq 1$, to rewrite the argument as $-\pi t/2 - t( v - \arctan(v)) $, for $v=\sqrt{x^2 -1}$. We recall that the contribution of the argument $-\pi t/2$ exactly corresponds to the one of $1/\cosh(\pi t / 2)$. Finally, we notice that $v- \arctan{v} \geq 0$, for $v \geq 0$, since the derivative of the left hand side is non-negative and the inequality holds for $v=0$.
\end{proof}
We split the sum over $\omega_g$  by considering the following three cases.  The contribution of the range $L|\omega|\leq \frac{t}{2}$ is bounded by
\begin{equation}
\label{eq:FourierExpansionBulkBound}
 \frac{1}{|\zeta_K(1+2it)|} \sum_{|\omega| \leq t/2L} \frac{\sigma_0(\omega)}{t} \ll_{\Omega} t \log^2 t,
\end{equation}
using $|\zeta_K(1+2it)|^{-1} \ll \log t$. 
The contribution of the range $|L|\omega| - t| \leq t^{1/3}$  gives
\begin{equation}
 \frac{1}{|\zeta_K(1+2it)|}  t^{-1/3} \sum_{|L|\omega| - t| \leq t^{1/3}} \frac{\sigma_0(\omega)}{t^{1/2}} \ll_{\Omega} 
 \frac{\log t}{t^{5/6}}   
 \Big(t^{4/3}\log t \Big)\ll_{\Omega} t^{1/2}\log^2t 
\end{equation}
using a trivial bound on the number of $\omega \in \mathcal{O}_K$ in the range. The case $|L|\omega| - t |\asymp \Delta$ for $t^{1/3}\ll \Delta \ll t$ gives
\begin{equation}
 \frac{1}{|\zeta_K(1+2it)|}  (t\Delta)^{-1/4}t^{1/2} \sum_{|\omega| = \frac{t}{L} + O(\frac{\Delta}{ L})} \frac{\sigma_0(\omega)}{t} \ll_{\Omega} 
 \frac{\log t}{(t\Delta)^{1/4}}   
 \Big(t^{1/2}\frac{\Delta}{\log t} + t^{\varepsilon} \Big),
\end{equation}
using Shiu's bound \cite{shiu_brun-titschmarsh_1980} on the function $\tilde{\sigma}_0(n)=\frac{1}{\sqrt{n}}\sum_{|\omega|^2 = n}\sigma_0(\omega)$.
These terms then give the same bound as \eqref{eq:FourierExpansionBulkBound}, plus an additional term that is $\ll_{\Omega} t^{-1/3 + \varepsilon}$.
The contribution from the remaining range with $u > 2t$ is easily bounded thanks to the exponential contribution in \eqref{eq:KBesselBalogh}. 
\end{proof}

\section{Proof of the theorem}

We have now all the tools necessary to prove Theorem \ref{thm}. We follows the strategy of Hua, Huang and Li \cite{hua_joint_2024}.

\subsection{Spectral expansion and first truncation}

We want to determine more precisely  the asymptotic behavior of
\begin{equation}
\langle \psi, f^2 g\rangle := \int_{\Gamma \backslash \mathbb{H}^3} \psi f^2 g,
\end{equation}
where $f$ and $g$ are Hecke-Maa{\ss} forms, and where $\psi \in C^\infty_c(\Gamma \backslash \mathbb{H}^3)$.
Apply Parseval formula from \Cref{lem:spectral-expansion}  to expand $\psi$ spectrally, so that the above integral can be rewritten as
\begin{equation}
\label{eq:spectral-expansion-1}
\frac{\langle \psi, 1\rangle \langle 1, f^2g\rangle}{\mathrm{vol}(\Gamma \backslash \mathbb{H}^3)} + \sum_m \langle \psi, u_m \rangle \langle u_m, f^2g \rangle + \frac{1}{\pi \sqrt{|d_K|}|\mathcal{O_K^\times}|}\int_{\mathbb{R}} \langle \psi, E(\cdot,it) \rangle \langle E(\cdot, it), f^2g\rangle dt.
\end{equation}

The first inner product $\langle \psi, 1\rangle$ is the total mass of $\psi$, i.e. the volume on which we are concentrating. 
Using the compact support and bounds on $\psi$ as well as the symmetry of $\Delta$, we get that the coefficients $\langle \psi, u \rangle$ (no matter whether $u$ is a Maa{\ss} form or an Eisenstein series) decay faster than $t_u^{-A}$ for all $A>0$. 
Indeed, using the fact that $\Delta u_k = \lambda_k u_k$, we obtain
\begin{align}
\langle \psi, u_k\rangle & = \lambda_k^{-\ell} \langle \psi, \Delta^\ell u_k \rangle = \lambda_k^{-\ell} \langle \Delta^\ell \psi, u_k \rangle \ll \lambda_k^{-\ell} \int |u_k| \cdot |\Delta^\ell \psi|
\end{align}
and, using the Cauchy-Schwarz inequality,  the fact that $\|\Delta^\ell \psi \|_2 \ll 1$ (since $\psi$ is $C^\infty_c$ and fixed) and $\|u_k\|_2 = 1$ (since we chose the forms to be $L^2$-normalized), we obtain that $\langle \psi, u_k\rangle \ll \lambda_k^{-\ell}$ for any $\ell > 0$. 

Analogously, we have for the Eisenstein series:
\begin{align}
\langle \psi, E_t\rangle & = \lambda_t^{-\ell} \langle \psi, \Delta^\ell E_t \rangle = \lambda_t^{-\ell} \langle \Delta^\ell \psi, E_t \rangle \ll \lambda_t^{-\ell} \int |E_t| \cdot |\Delta^\ell \psi|.
\end{align}
Using the  Cauchy-Schwarz inequality, the fact that $\|\Delta^\ell \psi \|_1 \ll 1$  and $\|E_t\|_\infty \ll t^{1+\varepsilon}$ from Lemma \ref{lem:eisenstein-bound}, we obtain  $\langle \psi, E_t \rangle \ll \lambda_t^{-\ell}$ for any $\ell > 0$. 

Moreover, by applying the Parseval formula to the other terms arising in \eqref{eq:spectral-expansion-1} which are a product of four terms, we obtain $\langle u_k, f^2g\rangle \ll (t_kt_ft_g)^B$ for a certain $B>0$, and $\langle E_t, f^2 g\rangle \ll (t_f t_g t)^B$ for a certain $B>0$ (see also the Section \ref{sec} for a similar argument). These bounds are easily balanced by the above bounds from the terms involving $\psi$. Therefore, this argument truncates the effective spectral range in \eqref{eq:spectral-expansion-1} to the very small ranges $t_m \ll \max(t_f, t_g)^\varepsilon$ and $t \ll \max(t_f, t_g)^\varepsilon$.

We are therefore left with
\begin{align}
\label{eq:spectral-expansion-2}
\langle \psi, f^2 g\rangle & = \frac{\langle \psi, 1\rangle \langle 1, f^2g\rangle}{\mathrm{vol}(\Gamma \backslash \mathbb{H}^3)} + \delta_{t_g \ll t_f^{o(1)}} \langle \psi, g \rangle \langle 1, f^2 g^2 \rangle + \sum_{\substack{u_k \neq g \\ t_k \ll \max(t_f,t_g)^{o(1)}}} \langle \psi, u_k \rangle \langle u_k, f^2 g\rangle \notag \\
& \quad  + \frac{1}{\pi \sqrt{|d_K|}|\mathcal{O_K^\times}|} \int_{|t|\ll \max(t_f, t_g)^{o(1)}} \langle \psi, E_t\rangle \langle E_t , f^2g \rangle dt, 
\end{align}
where we singled out the term $u_m = g$, for it will give the potential main term in Theorem~\ref{thm}.

\subsection{Reduction to triple products}
\label{sec}

Many terms in \eqref{eq:spectral-expansion-2} display inner products involving \textit{four} forms. In order to make them amenable to the Watson-Ichino formula, we apply further the spectral expansion from Lemma \ref{lem:spectral-expansion} to the terms $u_kg$ and $E_t g$, which allows to reduce the expression to products of \textit{three} forms at the cost of an extra spectral summation. We obtain
\begin{align}
\label{eq:spectral-expansion-3}
& \langle \psi, f^2g \rangle  = \langle \psi, 1 \rangle \langle 1, f^2 g\rangle  \\ \nonumber
& \quad + \delta_{t_g \ll t_f^{o(1)}} \langle \psi, g \rangle \left( \langle f^2, 1 \rangle \langle 1, g^2 \rangle + \sum_j \langle f^2, u_j \rangle \langle u_j, g^2 \rangle + \int \langle g^2, E_t \rangle \langle E_t, f^2 \rangle dt \rangle \right) \\ \nonumber
& + \sum_{\substack{u_k \neq g \\ t_k \ll \max(t_f, t_g)^{o(1)}}} \langle \psi, u_k \rangle \left( \underbrace{\langle gu_k, 1 \rangle}_{=0} + \sum_j \langle gu_k, u_j \rangle \langle u_j, f^2 \rangle + \int \langle u_k g, E_t \rangle \langle E_t, f^2 \rangle dt \right) \\ \nonumber
& + \int_{|t|\ll \max(t_f, t_g)^{o(1)}} \langle \psi, E_t \rangle \left( \underbrace{\langle gE_t, 1 \rangle}_{=0} + \sum_j \langle gE_t, u_j \rangle \langle u_j, f^2 \rangle + \int \langle gE_t, E_s \rangle \langle E_s, f^2 \rangle ds \right) \\ \nonumber
& + O(\max(t_f, t_g)^{-A}).
\end{align}

We need apply the Watson-Ichino formula and further spectral analysis to control the remaining inner products not involving the very well-behaved $\psi$. For the constant term, this is directly the Watson-Ichino formula from \Cref{lem:watson}, which reads
\begin{align}
|\langle 1, f^2g \rangle|^2 = |\langle f^2, g\rangle |^2 = \frac{\Lambda(\tfrac12, \mathrm{sym}^2 f \otimes g) \Lambda(\tfrac12, g)}{\Lambda(1, \mathrm{sym}^2 f)^2 \Lambda(1, \mathrm{sym}^2 g)}.
\end{align}

For the quadruple products $\langle u, f^2 g\rangle = \langle ug, f^2\rangle$, by \eqref{eq:spectral-expansion-3} we need to estimate more precisely quantities of the form $\langle u_k, fg \rangle$ and $\langle E_t, f g\rangle$. Such automorphic triple products are bounded by means of the Watson-Ichino formula, using (sub)convexity bounds, precise control on the Archimedean L-factors, via the Stirling formula, and lower bounds on the special values at $1$ occurring as denominators. More precisely, in case an Eisenstein series arises we use the Rankin-Selberg identity from \Cref{lem:rankin-selberg} and get
\begin{align}
\left\langle u_kg, E_t \right\rangle &= \frac{\rho_k(1)\rho_g(1) \Lambda(\tfrac{1+it}{2}, g \times u_k)}{2\Lambda(1+it)} \label{a} \\ 
\left\langle E_tg, E_\tau \right\rangle &= \frac{\rho_t(1)\rho_g(1) \Lambda(\tfrac{1+it+i\tau}{2}, g)\Lambda(\tfrac{1+i\tau-it}{2}, g)}{2\Lambda(1+it)}.
\end{align} 
In the case only cusp forms occur, we use the Watson-Ichino formula from \Cref{lem:watson} and obtain
\begin{align}
|\langle u_kg, u_j\rangle|^2 & \ll \frac{\Lambda(\tfrac12, u_k \times g \times u_j)}{\Lambda(1, \mathrm{sym}^2 u_k)\Lambda(1, \mathrm{sym}^2 g)\Lambda(1, \mathrm{sym}^2 u_j)} \\
|\langle u_j, f^2 \rangle|^2 & \ll \frac{\Lambda(\tfrac12, u_j \times \mathrm{sym}^2 f)\Lambda(\tfrac12, u_j)}{\Lambda(1, \mathrm{sym}^2 f)^2\Lambda(1, \mathrm{sym}^2 u_j)}. \label{b}
\end{align}

We in particular obtain
\begin{align}
|\left\langle u_kg, E_t \right\rangle|^2 &\ll \frac{ \Lambda(\tfrac12, E_t \times g \times u_k)}{|\Lambda(1+it)|^2\Lambda( 1, \mathrm{sym}^2 g)\Lambda(1, \mathrm{sym}^2 u_k)} \\ 
|\left\langle E_tg, E_\tau \right\rangle|^2 &\ll \frac{ \Lambda(\tfrac12, E_t \times E_\tau \times g)}{|\Lambda(1+it) \Lambda (1+i\tau)|^2 \Lambda(1, \mathrm{sym}^2 g)}.
\end{align} 

\subsection{Bounds on L-factors}

Let $\Gamma_{\C}(s)=2(2\pi)^{-s} \Gamma(s)$. Writing the Gamma factors explicitly, we obtain the following bounds: 
\begin{align}
|\left\langle u_kg, E_t \right\rangle|^2 &\ll \frac{  L(\tfrac12, E_t \times g \times u_k) \prod_{\pm,\pm,\pm} \Gamma_\C(\frac{1\pm it \pm i t_f \pm i t_g}{2})}{|\zeta_K(1+it)|^2 L( 1, \mathrm{sym}^2 g)L(1, \mathrm{sym}^2 u_k) |\Gamma(1+it)|^2\prod_{\pm,\pm} \Gamma_\C(1\pm it_g)\Gamma_\C(1\pm it_k)} \\ 
|\left\langle E_tg, E_\tau \right\rangle|^2 &\ll \frac{ L(\tfrac12, E_t \times E_\tau \times g) \prod_{\pm} \Gamma_\C(\frac{1\pm it \pm i t_g \pm i \tau}{2})} {|\zeta_K(1+it) \zeta_K (1+i\tau)|^2 L(1, \mathrm{sym}^2 g) |\Gamma_\C(1+it)\Gamma_\C(1+i\tau)|^2\prod_{\pm} \Gamma_\C(1\pm \frac{it_g}{2})}.
\end{align} 

In the case of cusp forms we get the following bounds:
\begin{align}
|\langle u_kg, u_j\rangle|^2 & \ll \frac{L(\tfrac12, u_k \times g \times u_j) \prod_{\pm,\pm,\pm}\Gamma_{\C}(\frac{1\pm it_k \pm it_g \pm it_j}{2})}{L(1, \mathrm{sym}^2 u_k)L(1, \mathrm{sym}^2 g)L(1, \mathrm{sym}^2 u_j)\prod_{\pm, v=k,g,j}\Gamma_{\C}(1 \pm it_v)} \\
|\langle u_j, f^2 \rangle|^2 & \ll \frac{L(\tfrac12, u_j \times \mathrm{sym}^2 f)L(\tfrac12, u_j) \prod_{\pm}\Gamma_{\C}(\frac{1\pm it_j}{2})^2 \prod_{\pm, \pm}\Gamma_{\C}(\frac{1\pm it_j}{2} \pm it_f)}{L(1, \mathrm{sym}^2 f)^2L(1, \mathrm{sym}^2 u_j) \prod_{\pm} \Gamma_{\C}(1\pm it_f)^2 \prod_{\pm} \Gamma_{\C}(1\pm it_j)}.  \label{b}
\end{align}

The expressions obtained are similar to the $\mathrm{GL}_2(\Q)$ case of Hua-Huang-Li~\cite{hua_joint_2024}, with $\Gamma_{\C}(s)$ replacing $\Gamma_{\R}(s)=\pi^{-s/2}\Gamma(\tfrac{s}{2})$. As a result, after applying Lemma \ref{lem:stirling}, it is possible to get similar estimates to \cite{hua_joint_2024}, with different polynomial terms. We report the explicit expressions for completeness.

\begin{multline}\label{eq:bound-cusp}
    |\langle u_kg , u_j\rangle \langle u_j, f^2 \rangle|\ll
    \frac{L(1/2, u_j)^{\frac{1}{2}}L(1/2, \sym^2f\times u_j)^{\frac{1}{2}}
    L(1/2, u_k\times g\times u_j)^{\frac{1}{2}}}
    {L(1, \sym^2u_j)L(1, \sym^2f)L(1, \sym^2u_k)^{\frac{1}{2}}L(1, \sym^2g)^{\frac{1}{2}}}
    \\ \times
    \frac{e^{-\frac{\pi}{2} Q_1(t_j; t_f, t_g, t_k)}}
    {t_j t_f t_k^{1/2} t_g^{1/2}},
\end{multline}
\begin{multline}\label{eq:bound-est}
    |\langle E_tg , u_j\rangle \langle u_j, f^2 \rangle|\ll
    \frac{L(1/2, u_j)^{\frac{1}{2}}L(1/2, \sym^2f\times u_j)^{\frac{1}{2}}
    L(1/2, E_t\times g\times u_j)^{\frac{1}{2}}}
    {L(1, \sym^2u_j)L(1, \sym^2f)|\zeta_K(1+it)|L(1, \sym^2g)^{\frac{1}{2}}}
    \\ \times
    \frac{e^{-\frac{\pi}{2} Q_1(t_j; t_f, t_g, t_k)}}
    {t_j t_f (1+|t|) ^{1/2}t_g^{1/2}},
\end{multline}
\begin{multline}\label{eq:bound-ugEEf2}
    |\langle u_kg , E_{\tau}\rangle \langle E_{\tau}, f^2\rangle|\ll
     \frac{|\zeta_K(\frac{1+i\tau}{2})L(1/2+i\tau, \sym^2f)
    L(1/2+i\tau, u_k\times g)|}
    {|\zeta_K(1+i\tau)|^2L(1, \sym^2f)L(1, \sym^2u_k)^{\frac{1}{2}}L(1, \sym^2g)^{\frac{1}{2}}}
    \\ \times
    \frac{e^{-\frac{\pi}{2} Q_1(\tau; t_f, t_g, t_k)}}
    {(1+|\tau|) t_f t_k^{1/2} t_g^{1/2}},
\end{multline}
and
\begin{multline}\label{eq:bound-EgEEf2}
    |\langle E_tg , E_{\tau}\rangle \langle E_{\tau}, f^2\rangle|\ll
    \frac{|\zeta_K(\frac{1+i\tau}{2})L(1/2+i\tau, \sym^2f)\prod_{\pm}
    L(1/2+i\tau\pm it, g)|}
    {|\zeta_K(1+i\tau)|^2|\zeta_K(1+it)|L(1, \sym^2f)L(1, \sym^2g)^{\frac{1}{2}}}
    \\ \times
    \frac{e^{-\frac{\pi}{2} Q_1(\tau; t_f, t_g, t)}}
    {(1+|\tau|)(1+|t|)^{1/2} t_f t_g^{1/2}},
\end{multline}
where
\begin{multline}\label{eq:Q1}
Q_1(t_j; t_f, t_g, t_k)=\Big|\frac{t_j}{2}
+t_f\Big|+\Big|\frac{t_j}{2}
-t_f\Big|
+\frac{|t_j+t_g+t_k|}{2}
+\frac{|t_j+t_g-t_k|}{2}
\\
+\frac{|t_j-t_g+t_k|}{2}
+\frac{|t_j-t_g-t_k|}{2}
-t_j-2t_f-t_k-t_g.
\end{multline}
\subsection{Final Estimates}
We apply the bounds obtained to the study of
 \begin{equation} 
\label{eq:spectral-sum-product-cusp}
\sum_j \langle gu_k, u_j \rangle \langle u_j, f^2 \rangle.
 \end{equation}
The other terms in \eqref{eq:spectral-expansion-3} can be treated in a similar way. The main input comes from the study of the function $Q_1(t_j,t_f,t_g,t_k)$. From \eqref{eq:bound-cusp}, we get that if $Q_1$ is large enough, the exponential contribution will dominate the polynomial terms $P(t_j,t_f, t_g, t_k)$ coming from the convexity bounds available for the central values of the $L$-functions considered, (see~\cite{ik}) and the logarithmic factors coming from the symmetric square $L$-functions (this is equivalent to the non-existence of Siegel zeros, and this is proven for symmetric squares of $\mathrm{GL}(2)$-forms in \cite{hoffstein_siegel_1995}, conditionally on an hypothesis proven in \cite{banks_twisted_1997}). For this reason, the sum in \eqref{eq:spectral-sum-product-cusp} can be truncated to $t_j<2t_f+t_k+t_g$. In fact, in the other case we have $Q_1>2t_f+t_k+t_j$ and it grows linearly in $t_j$, which results in an exponential decay in $t_j$ for the considered terms, from \eqref{eq:bound-cusp}. This implies that the contribution of the second range is negligible or comparable to the first range unconditionally. 

We now split the study in two cases. If $2t_f<t_g-t_g^{\epsilon}$, we obtain that $Q_1(t,t_f,t_j,t_k)>t_g-2t_f-t_g^{o(1)}$, where we used $t_k=t_g^{o(1)}$. Then, an application of the convexity upper bounds for the $L$-functions and the logarithmic lower bounds for the  symmetric square $L$-functions give unconditionally an exponential bound of the form
\begin{equation}
\sum_j \langle gu_k, u_j \rangle \langle u_j, f^2 \rangle \ll e^{-\frac{\pi}{2}(t_g-2t_f-t_g^{o(1)})}.
\end{equation}
If $2t_f>t_g-t_g^{\epsilon}$, we assume GLH. Using the trivial bound $Q_1 \geq 0$ and \eqref{eq:bound-cusp} we obtain
\begin{equation}
\label{eq:GLH-bound}
   \sum_{t_j<2t_f+t_k+t_g} |\langle u_kg , u_j\rangle \langle u_j, f^2 \rangle|\ll
      \sum_{t_j<2t_f+t_k+t_g} \frac{(t_g t_j t_f)^{\delta}}{t_j t_f t_g^{1/2} t_k^{1/2}} \ll (t_f t_g^{1/2})^{-1+\delta'}
\end{equation}
for any $\delta'>\delta>0$, using again the hypothesis $t_k=t_g^{o(1)}$.

This concludes the proof of \Cref{thm}. \qed 

\begin{rem} 
A more accurate study of the conductors of the L-functions involved reveals the need of a strong form of subconvexity to achieve an $o(1)$ error term. In particular, in the numerator of ~\eqref{eq:bound-cusp} we get a degree 16 $L$-function, whose analytic conductor $C$ is bounded by 
\[
C\ll t_j^4 t_f^{12}.
\]
As a consequence, the numerator grows as $(t_j^2 t_f^6)^E$ and a subconvex bound with exponent $E=1/8-\delta$ for the L-functions  is necessary to achieve the result. 
\end{rem}

\begin{rem}
We expect a similar result to hold in the level aspect. The result would follow from the contributions to the Watson-Ichino formula coming from the ramified places. The evaluation of such contributions can be computed explicity as in Lemma \ref{lem:watson}.
\end{rem}
\begin{rem}
The methods of this paper generalize directly to the general case of number fields $F$ of class number one. The contribution given by the different infinite places in the analogue of Section 3.3 can be treated separately. The estimates finally reduce to~$r$ copies of the real case as in \cite{hua_joint_2024} and $s$ copies of the complex case treated in the present article, for $r$ and $s$ respectively the number of real and complex embeddings of~$F$. The expected result is a bound on  the error term of the form $$O_{\psi}((t_f(1+|2t_f-t_g|))^{-r/4+\epsilon}(t_ft_g^{1/2})^{-s+\epsilon}).$$
\end{rem}
%

\textbf{Data availability statement.} Data sharing is not applicable --- the paper describes entirely theoretical research.

\textbf{Conflict of interest statement} The authors declare that they have no conflicts of interest.

\textbf{Acknowledgments} L. M. is Co-Funded by the European Union’s Horizon Europe research and innovation programme under the Marie Sklodowska-Curie Grant Agreement No 101126554. Views and opinions expressed are however those of the authors only and do not necessarily reflect those of the European Union. Neither the European Union nor the granting authority can be held responsible for them.

\begin{figure}[h]
\centering
\includegraphics[scale=0.22]{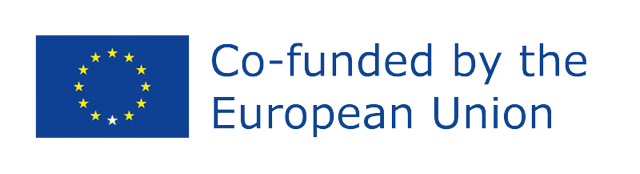}
\end{figure}

\nocite{*}
\bibliographystyle{amsalpha} 
\bibliography{biblio.bib} 

\end{document}